 \theoremstyle{definition}
 \newtheorem*{defn*}{\protect\definitionname}
 \newcommand\thmsname{\protect\theoremname}
 \newcommand\nm@thmtype{theorem}
 \theoremstyle{plain}
 \newenvironment{namedthm}[1][Undefined Theorem Name]{
   \ifx{#1}{Undefined Theorem Name}\renewcommand\nm@thmtype{theorem*}
   \else\renewcommand\thmsname{#1}\renewcommand\nm@thmtype{namedtheorem}
   \fi
   \begin{\nm@thmtype}}
   {\end{\nm@thmtype}}
\theoremstyle{plain}
\newtheorem{thm}{\protect\theoremname}
  \theoremstyle{remark}
  \newtheorem*{rem*}{\protect\remarkname}
  \theoremstyle{plain}
  \newtheorem{cor}{\protect\corollaryname}
  \theoremstyle{plain}
  \newtheorem{lem}{\protect\lemmaname}
  \theoremstyle{plain}
  \newtheorem{prop}{\protect\propositionname}
  \theoremstyle{definition}
  \newtheorem{problem}{\protect\problemname}
  \providecommand{\definitionname}{Definition}
  \providecommand{\lemmaname}{Lemma}
  \providecommand{\problemname}{Problem}
  \providecommand{\propositionname}{Proposition}
  \providecommand{\remarkname}{Remark}
  \providecommand{\theoremname}{Theorem}
\providecommand{\corollaryname}{Corollary}
\providecommand{\theoremname}{Theorem}
\begin{document}

\title{On Exceptional Sets in the Metric Poissonian Pair Correlations problem}

\author{Thomas Lachmann\thanks{The first author is supported by the Austrian Science Fund (FWF):
Y-901.}, and Niclas Technau\thanks{The second author is supported by the Austrian Science Fund (FWF)
projects: W1230, and (for part of the time) by Y-901.}}
\maketitle
\begin{abstract}
Let $\left(a_{n}\right)_{n}$ be a strictly increasing sequence of
positive integers, denote by $A_{N}=\left\{ a_{n}:\,n\leq N\right\} $
its truncations, and let $\alpha\in\left[0,1\right]$. We prove that
if the additive energy $E\left(A_{N}\right)$ of $A_{N}$ is in $\Omega\left(N^{3}\right)$,
then the sequence $\left(\left\langle \alpha a_{n}\right\rangle \right)_{n}$
of fractional parts of $\alpha a_{n}$ does not have Poissonian pair
correlations (PPC) for almost every $\alpha$ in the sense of Lebesgue
measure. Conversely, it is known that $E\left(A_{N}\right)=\mathcal{O}\left(N^{3-\varepsilon}\right)$,
for some fixed $\varepsilon>0$, implies that $\left(\left\langle \alpha a_{n}\right\rangle \right)_{n}$
has PPC for almost every $\alpha$. This note makes a contribution
to investigating the energy threshold for $E\left(A_{N}\right)$ to
imply this metric distribution property. We establish, in particular,
that there exist sequences $\left(a_{n}\right)_{n}$ with 
\[
E\left(A_{N}\right)=\Theta\left(\frac{N^{3}}{\log\left(N\right)\log\left(\log N\right)}\right)
\]
such that the set of $\alpha$ for which $\left(\alpha a_{n}\right)_{n}$
does not have PPC is of full Lebesgue measure. Moreover, we show that
for any fixed $\varepsilon>0$ there are sequences $\left(a_{n}\right)_{n}$
with $E\left(A_{N}\right)=\Theta\left(\frac{N^{3}}{\log\left(N\right)\left(\log\log N\right)^{1+\varepsilon}}\right)$
satisfying that the set of $\alpha$ for which the sequence $\left(\bigl\langle\alpha a_{n}\bigr\rangle\right)_{n}$
does not have PPC is of full Hausdorff dimension.
\end{abstract}
\global\long\def\NPPC{\mathrm{NPPC}}
\global\long\def\FJ{F_{j}}

\section{Introduction}

The theory of uniform distribution modulo $1$ dates back, at least,
to the seminal paper of Weyl \cite{Weyl: =0000DCber die Gleichverteilung von Zahlen mod. Eins}.
Weyl showed, inter alia, that for any fixed irrational $\alpha\in\mathbb{R}$
and integer $d\geq1$ the sequences $\left(\bigl\langle\alpha n^{d}\bigr\rangle\right)_{n}$
are uniformly distributed modulo $1$. However, in recent years various
authors \cite{Aistleitner Larcher Lewko: Additive Energy and the Hausdorff Dimension of the Exceptional Set in Metric Pair Correlation Problems,Heath-Brown: Pair correlation for fractional parts of n^2,Rudnick Sarnak: The pair correlation function of fractional parts of polynomials,Rudnick Sarnak Zaharescu: The distribution of spacings between the fractional parts of n^2,Rudnick Zaharescu: A metric result on the pair correlation of fractional parts of sequences,Truelsen: Divisor problems and the pair correlation for the fractional parts of n^2,Walker: The Primes are not Metric Poissonian}
have been investigating a more subtle distribution property of such
sequences - namely, whether the asymptotic distribution of the pair
correlations has a property which is called Poissonian, and defined
as follows:
\begin{defn*}
Let $\left\Vert \cdot\right\Vert $ denote the distance to the nearest
integer. A sequence $\left(\theta_{n}\right)_{n}$ in $\left[0,1\right]$
is said to have (asymptotically) Poissonian pair correlations, if
for each $s\geq0$ the pair correlation function\footnote{The subscript $2$ in $R_{2}$ indicates that relations of second
order, i.e. pair correlations, are counted.}
\begin{equation}
R_{2}\left(\left[-s,s\right],\left(\theta_{n}\right)_{n},N\right)\coloneqq\frac{1}{N}\#\left\{ 1\leq i\neq j\leq N:\,\left\Vert \theta_{i}-\theta_{j}\right\Vert \leq\frac{s}{N}\right\} \label{eq: definition of the Pair Correlation Counting function}
\end{equation}
tends to $2s$ as $N\rightarrow\infty$. Moreover, let $\left(a_{n}\right)_{n}$
denote a strictly increasing sequence of positive integers. If no
confusion can arise, we write
\[
R\left(\left[-s,s\right],\alpha,N\right)\coloneqq R_{2}\left(\left[-s,s\right],\left(\alpha a_{n}\right)_{n},N\right)
\]
and say that a sequence $\left(a_{n}\right)_{n}$ has metric Poissonian
pair correlations if $\left(\alpha a_{n}\right)_{n}$ has Poissonian
pair correlations for almost all $\alpha\in\left[0,1\right]$ in the
sense of Lebesgue measure.
\end{defn*}
It is known that if a sequence $\left(\theta_{n}\right)_{n}$ has
Poissonian pair correlations, then it is uniformly distributed modulo
$1$, cf. \cite{Aistleitner Lachmann Pausinger: Pair correlations and equidistribution,Larcher Grepstad: On pair correlation and discrepancy}.
Yet, the sequences $\left(\left\langle \alpha n^{d}\right\rangle \right)_{n}$
do \emph{not} have Poissonian pair correlations for \emph{any} $\alpha\in\mathbb{R}$
if $d=1$. For $d\geq2$, Rudnick and Sarnak \cite{Rudnick Sarnak: The pair correlation function of fractional parts of polynomials}
proved that $\left(n^{d}\right)_{n}$ has metric Poissonian pair correlations
(metric PPC). For alternative proofs, we refer the reader to Heath-Brown
\cite{Heath-Brown: Pair correlation for fractional parts of n^2}
and the work of Marklof and Strömbergsson \cite{Marklof Str=0000F6mbergsson: Equidistribution of Kronecker sequences along closed horocycles}.\footnote{It is worthwhile to mention that the case $d=2$ is of particular
interest for its connection to mathematical physics, see \cite{Rudnick Sarnak: The pair correlation function of fractional parts of polynomials}
for further references.} Given these results, it is natural to investigate which properties
of a sequence of integers $\left(a_{n}\right)_{n}$ implies the metric
PPC of $\left(a_{n}\right)_{n}$. Partial answers are known, e.g.
it follows from work of Boca and Zaharescu \cite{Boca Zaharescu: Pair correlation of values of rational functions (mod p)}
that $\left(P\left(n\right)\right)_{n}$ has metric PPC if $P$ is
any polynomial with integer coefficients of degree at least two. An
interesting general result in this direction is due to Aistleitner,
Larcher, and Lewko \cite{Aistleitner Larcher Lewko: Additive Energy and the Hausdorff Dimension of the Exceptional Set in Metric Pair Correlation Problems}
who used a Fourier analytic approach combined with a bound on GCD
sums of Bondarenko and Seip \cite{Bondarenko Seip: GCD sums and complete sets of square-free numbers}
to relate the metric PPC of $\left(a_{n}\right)_{n}$ with its combinatoric
properties. For stating it, let $\left(a_{n}\right)_{n}$ denote henceforth
a strictly increasing sequence of positive integers and denote the
set of the first $N$ elements of $\left(a_{n}\right)_{n}$ by $A_{N}$.
Moreover, define the additive energy $E\left(I\right)$ of a finite
set integers $I$ via
\[
E\left(I\right)\coloneqq\sum_{\underset{a+b=c+d}{a,b,c,d\in I}}1.
\]
In the following, let $\mathcal{O}$ and $o$ denote the standard
Landau symbols/O-notation.\\
\\
A main finding of \cite{Aistleitner Larcher Lewko: Additive Energy and the Hausdorff Dimension of the Exceptional Set in Metric Pair Correlation Problems}
is the implication that if the truncations $A_{N}$ satisfy 
\begin{equation}
E\left(A_{N}\right)=\mathcal{O}\left(N^{3-\varepsilon}\right)\label{eq: Aistleitner bound}
\end{equation}
for some fixed $\varepsilon>0$, then $\left(a_{n}\right)_{n}$ has
metric PPC. Note that $\left(\#I\right)^{2}\leq E\left(I\right)\leq\left(\#I\right)^{3}$
where $\#I$ denotes the cardinality of $I\subset\mathbb{Z}$. Roughly
speaking, a set $I$ has large additive energy if and only if it contains
a ``large'' arithmetic progression like structure. Indeed, if $\left(a_{n}\right)_{n}$
is a geometric progression or of the form $\left(n^{d}\right)_{n}$
for $d\geq2,$ then (\ref{eq: Aistleitner bound}) is satisfied. Furthermore,
note that the metric PPC property may be seen as a sort of pseudorandomness;
in fact, for a given sequence of $\left[0,1\right]$-uniformly distributed,
and independent random variables $\left(\theta_{n}\right)_{n}$, one
has
\begin{equation}
\lim_{N\rightarrow\infty}R\left(\left[-s,s\right],\left(\theta_{n}\right)_{n},N\right)=2s\label{eq: counting function asymtotically Poissonian}
\end{equation}
for every $s\geq0$ almost surely.\\
\\
Wondering about the optimal bound for the additive energy of the truncations
$A_{N}$ to imply the metric PPC property of $\left(a_{n}\right)_{n}$,
the two following questions were raised in \cite{Aistleitner Larcher Lewko: Additive Energy and the Hausdorff Dimension of the Exceptional Set in Metric Pair Correlation Problems}
where we use the convention that $f=\Omega\left(g\right)$ means for
$f,g:\mathbb{N}\rightarrow\mathbb{R}$ there is a constant $c>0$
such that $g\left(n\right)>cf\left(n\right)$ holds for infinitely
many $n$. 
\begin{namedthm}[Question 1]
Is it possible for a strictly increasing sequence $\left(a_{n}\right)_{n}$
of positive integers with $E\left(A_{N}\right)=\Omega\left(N^{3}\right)$
to have metric PPC?
\begin{namedthm}[Question 2]
Do all increasing strictly sequences $\left(a_{n}\right)_{n}$ of
positive integers with $E\left(A_{N}\right)=o\left(N^{3}\right)$
have metric PPC? 
\end{namedthm}
\end{namedthm}
\noindent Both questions were answered in the negative by Bourgain
whose proofs can be found in \cite{Aistleitner Larcher Lewko: Additive Energy and the Hausdorff Dimension of the Exceptional Set in Metric Pair Correlation Problems}
as an appendix, without giving an estimate on the measure of the set
that was used to answer Question 1, and without a quantitative bound
on $E\left(A_{N}\right)$ appearing in the negation of Question 2.
However, a quantitative analysis, as noted in \cite{Walker: The Primes are not Metric Poissonian},
shows that the sequence Bourgain constructed for Question 2 satisfies
\begin{equation}
E\left(A_{N}\right)=\mathcal{O}_{\varepsilon}\left(\frac{N^{3}}{\left(\log\log N\right)^{\frac{1}{4}+\varepsilon}}\right)\label{eq: Bourgains bound for the sequence of non PPC}
\end{equation}
for any fixed $\varepsilon>0$. Moreover, Nair posed the problem\footnote{This problem was posed at the problem session of the ELAZ conference
in 2016.} whether the sequence of prime numbers $\left(p_{n}\right)_{n}$,
ordered by increasing value, has metric PPC. Recently, Walker \cite{Walker: The Primes are not Metric Poissonian}
answered this question in the negative. Thereby he gave a significantly
better bound than (\ref{eq: Bourgains bound for the sequence of non PPC})
for the additive energy $E\left(A_{n}\right)$ for a sequence $\left(a_{n}\right)_{n}$
not having metric PPC - since the additive energy of the truncations
of $\left(p_{n}\right)_{n}$ is in $\Theta\bigl(\left(\log N\right)^{-1}N^{3}\bigr)$
where $f=\Theta\left(g\right)$, for functions $f,g$, means that
$f=\mathcal{O}\left(g\right)$ and $g=\mathcal{O}\left(f\right)$
holds. The main objective of our work is to improve upon these answers
to Questions 1, and Question 2. 

For a given sequence $\left(a_{n}\right)_{n}$, we denote by $\NPPC\left(\left(a_{n}\right)_{n}\right)$
the (``exceptional'') set of all $\alpha\in\left(0,1\right)$ such
that the pair correlation function (\ref{eq: definition of the Pair Correlation Counting function})
does not tend to $2s$, as $N$ tends to infinity, for some $s\geq0$. 
\begin{namedthm}[Theorem A (Bourgain, \cite{Aistleitner Larcher Lewko: Additive Energy and the Hausdorff Dimension of the Exceptional Set in Metric Pair Correlation Problems})]
\label{thm: Bourgain's Result concerning the measure of the set of counterexamples}Suppose
$\left(a_{n}\right)_{n}$ is a strictly increasing sequence of positive
integers. If $E(A_{N})=\Omega\left(N^{3}\right)$, then $\NPPC\left(\left(a_{n}\right)_{n}\right)$
has positive Lebesgue measure.
\end{namedthm}
We prove the following sharpening.
\begin{thm}
\label{thm: full measure of set of counterexamples}Suppose $\left(a_{n}\right)_{n}$
is a strictly increasing sequence of positive integers. If $E(A_{N})=\Omega\left(N^{3}\right)$,
then $\NPPC\left(\left(a_{n}\right)_{n}\right)$ has full Lebesgue
measure.
\end{thm}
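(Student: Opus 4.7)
The strategy is to refine Bourgain's positive-measure conclusion to a \emph{localized} positive-proportion statement: there exists a universal constant $c>0$, depending only on the constant $c_0$ in $E(A_{N_k})\geq c_0 N_k^3$, such that for every subinterval $J\subseteq[0,1]$,
\[
\lambda\bigl(\NPPC\bigl((a_n)_n\bigr)\cap J\bigr)\geq c\,|J|.
\]
Once this is shown, $\NPPC$ has full Lebesgue measure: otherwise the Lebesgue density theorem yields a density point $\alpha_0$ of the complement, and every sufficiently small interval $J\ni\alpha_0$ would have $\lambda(\NPPC\cap J)/|J|$ arbitrarily close to $0$, contradicting the displayed inequality.

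The technical core is a \emph{localized variance inequality}: there exist $s_0,c_1>0$ such that, for every interval $J\subseteq[0,1]$ and every $k$ sufficiently large (depending on $|J|$),
\[
\int_J\bigl(R([-s_0,s_0],\alpha,N_k)-2s_0\bigr)^2\,d\alpha\geq c_1\,|J|.
\]
Expanding the periodized indicator of $[-s_0/N,s_0/N]$ as a Fourier series, one obtains
\[
R([-s_0,s_0],\alpha,N)-2s_0=\sum_{h\neq0}\hat f_{s_0}(h)\,\frac{|S_h(\alpha)|^2-N}{N}+O(1/N),
\]
with $S_h(\alpha)=\sum_{n\leq N}e(h\alpha a_n)$; a diagonal Parseval computation recovers the global Bourgain bound $\|R-2s_0\|_2^2\gtrsim E(A_N)/N^3$. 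The decay $|\hat f_{s_0}(h)|\lesssim\min(s_0/N,1/|h|)$ forces the $L^2$-mass of $R-2s_0$ to concentrate at Fourier frequencies $|k|\gg|J|^{-1}$ once $N$ is large in terms of $|J|^{-1}$. Splitting $R-2s_0=F_{\text{low}}+F_{\text{high}}$ according to a frequency threshold $H\sim|J|^{-1}$ and invoking a standard Fourier-restriction comparison, $\int_JF_{\text{high}}^2$ is within a lower-order error of $|J|\cdot\|F_{\text{high}}\|_2^2$, which establishes the localized bound.

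The final step combines this local $L^2$-lower bound with a matching local $L^4$-upper bound $\int_J\bigl(R([-s_0,s_0],\alpha,N_k)-2s_0\bigr)^4\,d\alpha\lesssim|J|$, obtainable by an explicit eight-tuple count weighted by $\hat f_{s_0}$-factors and controlled by $E(A_N)$ together with its higher-order analogues. The Paley--Zygmund inequality applied to the probability space $(J,\lambda|_J/|J|)$ then yields universal $\eta,c>0$ such that, for every large $k$,
\[
\lambda\bigl(\{\alpha\in J:|R([-s_0,s_0],\alpha,N_k)-2s_0|>\eta\}\bigr)\geq c\,|J|,
\]
and the reverse Fatou inequality for sets applied along the subsequence $(N_k)$ delivers the required lower bound on $\lambda(\NPPC\cap J)$. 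The principal obstacle is the localized variance estimate: one must rule out that off-diagonal contributions conspire to place the $L^2$-mass of $R-2s_0$ at low Fourier frequencies, and this requires a careful interplay between the products $\hat f_{s_0}(h_1)\hat f_{s_0}(h_2)$ and the multiplicative structure of the representation function $r_N(k)=\#\{(n,m):a_n-a_m=k\}$.
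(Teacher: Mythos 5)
Your plan is structurally different from the paper's and, as written, has two genuine gaps that I do not see how to close without essentially reinventing the paper's argument.

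First, the step ``a standard Fourier-restriction comparison gives $\int_J F_{\text{high}}^2 \approx |J|\,\|F_{\text{high}}\|_2^2$'' is not standard and is false in general: a function whose Fourier transform is supported at frequencies $|k|>H$ can perfectly well be concentrated on a short interval disjoint from $J$ (take a smooth bump modulated by $e(H\alpha)$). To make such a localization work one needs additional structure, e.g.\ a lacunary/well-spaced frequency set or a large-sieve-type input, and nothing of the sort is available here: the relevant frequencies are $h\,(a_n-a_m)$, which are dense in $\mathbb{Z}$. You would need to \emph{prove} equidistribution of the $L^2$-mass across subintervals; this is the heart of the difficulty and your plan asserts it rather than establishing it.

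Second, the claimed local $L^4$ bound $\int_J\bigl(R([-s_0,s_0],\alpha,N_k)-2s_0\bigr)^4\,d\alpha\lesssim|J|$ is actually false without first excising a small exceptional set. Near a rational $p/q$ of small height, $R$ takes values of size $\sim N/q$ on an interval of length $\sim s/N^2$, so already for $A_N=\{1,\dots,N\}$ (which satisfies $E(A_N)=\Omega(N^3)$) one has $\int_0^1 R^4\gtrsim N^2$, not $O(1)$. The Paley--Zygmund step as formulated therefore collapses. The fourth moment involves eightfold additive configurations that are not controlled by $E(A_N)$, and the ``higher-order analogues'' you invoke are precisely the quantities that blow up.

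The paper avoids both issues by a different route. Using Balog--Szemér\'edi--Gowers, it extracts a dense subset $B_N\subseteq A_N$ with $\#(B_N-B_N)\ll N$, removes from $[0,1]$ a Bohr-set neighbourhood $\Omega_{\varepsilon,N}$ of measure $\leq 2\varepsilon$ so that no difference from $B_N$ contributes, and then works only with the \emph{first} moment: on each piece $\mathcal{P}_i$ of a fine partition of $[0,1]$, the mean of the reduced counting function is $\leq 2\tilde c s/P+O(P^{-1}R^{-k})$ with $\tilde c=1-c_1^2<1$, because a positive proportion of the pairs has been deleted. This local first-moment deficit immediately yields, via Markov on each $\mathcal{P}_i$, a set $\Delta_N(i)\subset\mathcal{P}_i$ of proportion bounded below where $R\leq 2(1-c_1^2/2)s$. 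Crucially, these sets are finite unions of intervals, which is what makes the resulting exceptional sets $\Omega_r$ quasi-independent of one another (condition~\ref{enu:exceptional sets get upper asymptotically independent}); full measure then follows from the Erd\H{o}s--R\'enyi form of Borel--Cantelli rather than from Lebesgue density. In short, the paper localizes the first moment and controls the sets combinatorially, sidestepping both of the analytic obstacles your $L^2$--$L^4$ Paley--Zygmund scheme runs into.
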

Moreover, we lower the known energy threshold, and estimate the Hausdorff
dimension of the exceptional set from below. For stating our second
main theorem, we denote by $\mathbb{R}_{>x}$ the set of real numbers
exceeding a given $x\in\mathbb{R}$, and recall that for a function
$g:\mathbb{R}_{>1}\rightarrow\mathbb{R}_{>0}$ the lower order of
infinity $\lambda\left(g\right)$ is defined by
\[
\lambda\left(g\right)\coloneqq\liminf_{x\rightarrow\infty}\frac{\log g\left(x\right)}{\log x}.
\]
\begin{rem*}
This notion arises naturally in the context of Hausdorff dimensions.
Roughly speaking, it quantifies the (lower) asymptotic growth rate
of a function.
\end{rem*}
\begin{thm}
\label{thm: lowering the known Energy threshold}Let $f:\mathbb{R}_{>0}\rightarrow\mathbb{R}_{>2}$
be a function increasing monotonically to $\infty$, and satisfying
$f\left(x\right)=\mathcal{O}\bigl(\left(\log x\right)^{-\nicefrac{7}{3}}x^{\nicefrac{1}{3}}\bigr)$.
Then, there exists a strictly increasing sequence $\left(a_{n}\right)_{n}$
of positive integers with $E(A_{N})=\Theta\bigl((f\left(N\right))^{-1}N^{3}\bigr)$
such that if 
\begin{equation}
\sum_{n\geq1}\frac{1}{nf(n)}\label{eq: divergence of the reciprocal of (f(n) times n)}
\end{equation}
diverges, then for Lebesgue almost all $\alpha\in\left[0,1\right]$
\begin{equation}
\limsup_{N\rightarrow\infty}R\left(\left[-s,s\right],\alpha,N\right)=\infty\label{eq: divergence of the Pair Correlation Function}
\end{equation}
holds for any $s>0$; additionally, if (\ref{eq: divergence of the reciprocal of (f(n) times n)})
converges and $\sup\left\{ f\left(2x\right)/f\left(x\right):\,x\geq x_{0}\right\} $
is strictly less than $2$ for some $x_{0}>0$, then $\NPPC\left(\left(a_{n}\right)_{n}\right)$
has Hausdorff dimension at least $\left(1+\lambda\right)^{-1}$ where
$\lambda$ is the lower order of infinity of $f$. 
\end{thm}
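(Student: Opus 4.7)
The plan is to build an explicit sequence $(a_n)_n$ as a union of long arithmetic progressions whose parameters are calibrated to both the prescribed additive energy and the Borel--Cantelli series $\sum 1/(nf(n))$; the analysis then proceeds by a Khintchine--Jarn\'ik framework applied to the resulting ``approximation events''.

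\textbf{Construction.} I would take $(a_n)_n$ to be the increasing enumeration of a disjoint union $\bigsqcup_k A^{(k)}$ of arithmetic progressions $A^{(k)} = \{c_k + j d_k : 1 \le j \le M_k\}$ at well-separated scales. The block lengths $M_k$ are calibrated so that $\sum_{j \le k} M_j^3 \asymp N_k^3/f(N_k)$, where $N_k := M_1 + \cdots + M_k$; under $f(x) = \mathcal{O}((\log x)^{-7/3} x^{1/3})$ and $N_k$ roughly geometric this forces $M_k \asymp N_k/f(N_k)^{1/3}$. The common differences $d_k$ are chosen extremely rapidly growing (so that the blocks are well-separated) and pairwise coprime (so that the arithmetic events defined below become quasi-independent); the starting points $c_k$ ensure strict monotonicity.

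\textbf{Additive energy.} Within-block contributions give $E(A^{(k)}) = \Theta(M_k^3)$. Cross-block contributions, coming from additive quadruples spanning several blocks, are controlled using the rapid growth of the $d_k$ via a direct pigeonhole argument, giving an error term of order $O(N_k^2 \log N_k)$, which is $o(N_k^3/f(N_k))$ under the hypothesis on $f$. Summing, $E(A_{N_k}) = \Theta(N_k^3/f(N_k))$; extension to general $N$ follows by monotonicity and the geometric growth of $N_k$.

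\textbf{From approximation to PPC failure.} If $\alpha$ satisfies $\|\alpha d_k\| < \psi_k$, then pairs $(i,j)$ within block $A^{(k)}$ with $|i-j| < s/(\psi_k N_k)$ produce close differences $\|\alpha(a_i - a_j)\| < s/N_k$. The resulting lower bound on the pair-correlation counting function is
\[
R([-s,s],\alpha,N_k) \gtrsim \frac{M_k \cdot \min(M_k,\, s/(\psi_k N_k))}{N_k},
\]
and one chooses $\psi_k$ so as to force this to tend to infinity while keeping $|F_k| := |\{\alpha \in [0,1] : \|\alpha d_k\| < \psi_k\}| \asymp 2\psi_k$ at the value demanded by the targeted Borel--Cantelli estimate. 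The key identity, whose verification is the technical heart of the argument, is
\[
\sum_{k} |F_k| \asymp \sum_n \frac{1}{n f(n)},
\]
and this is where the hypothesis on $f$ and the calibration of $M_k$ interlock precisely.

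\textbf{Divergence case.} When $\sum 1/(nf(n)) = \infty$, the right-hand side of the above comparison diverges. The events $F_k$ are essentially independent (by coprimality and rapid growth of the $d_k$); more precisely, they satisfy a Duffin--Schaeffer-type second-moment estimate, from which a standard Gallagher-style zero-one argument yields that $\limsup_k F_k$ has full Lebesgue measure. Every $\alpha$ in this limsup satisfies $R([-s,s],\alpha,N_k) \to \infty$ along a subsequence of $k$'s, giving (\ref{eq: divergence of the Pair Correlation Function}) for every $s > 0$.

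\textbf{Convergence case.} When $\sum 1/(nf(n)) < \infty$, $|\limsup_k F_k| = 0$, but one applies the Mass Transference Principle of Beresnevich--Velani to the collection of balls $\{B(p/d_k, \psi_k/d_k) : k \in \mathbb{N},\, 0 \le p < d_k\}$. Under the regularity assumption $\sup_{x \ge x_0} f(2x)/f(x) < 2$, the critical exponent computes to $1/(1+\lambda(f))$, yielding
\[
\dim_H \NPPC\!\left((a_n)_n\right) \ge \dim_H(\limsup_k F_k) \ge \frac{1}{1+\lambda(f)}.
\]

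\textbf{Main obstacle.} The central difficulty is the simultaneous calibration of $M_k$ and $\psi_k$ so that (i) $E(A_{N_k}) \asymp N_k^3/f(N_k)$, (ii) every $\alpha \in F_k$ produces PPC failure at time $N_k$ with $R \to \infty$, and (iii) $\sum |F_k|$ matches $\sum 1/(nf(n))$. The hypothesis $f(x) = \mathcal{O}((\log x)^{-7/3} x^{1/3})$ is tailored precisely to making these three requirements simultaneously achievable, and its role is most visible in bounding the cross-block cross-terms in (i) by $o(N^3/f(N))$. The quasi-independence needed for the divergence Borel--Cantelli is standard given the coprimality of the $d_k$, and the Mass Transference application in the convergence case is formal once the correct limsup of balls has been identified.
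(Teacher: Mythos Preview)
Your approach has a structural gap that makes the ``key identity'' $\sum_k |F_k| \asymp \sum_n 1/(nf(n))$ unachievable. With the single-denominator events $F_k = \{\alpha : \|\alpha d_k\| < \psi_k\}$ one has $|F_k| = 2\psi_k$, and your own lower bound $R \gtrsim M_k \min\bigl(M_k,\, s/(\psi_k N_k)\bigr)/N_k$ forces $\psi_k = o(M_k/N_k^2)$ whenever that bound tends to infinity (check both regimes of the $\min$, using $M_k^2/N_k\to\infty$ in the first). But then
\[
\sum_k |F_k| \ll \sum_k \frac{M_k}{N_k^2} = \sum_k \frac{N_k - N_{k-1}}{N_k^2} \le \sum_k \Bigl(\frac{1}{N_{k-1}} - \frac{1}{N_k}\Bigr) < \infty
\]
for \emph{every} $f$, so the divergence Borel--Cantelli can never fire and the claimed comparison with $\sum 1/(nf(n))$ is impossible. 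There is also an inconsistency in the construction itself: since $N_k = \sum_{j\le k} M_j$, asking for $N_k$ to be roughly geometric forces $M_k \asymp N_k$, contradicting $M_k \asymp N_k/f(N_k)^{1/3}$; and if you drop the geometric assumption and simply impose $M_k \asymp N_k/f(N_k)^{1/3}$, an integral comparison gives $\sum_{j\le k} M_j^3 \asymp N_k^3/f(N_k)^{2/3}$, not $N_k^3/f(N_k)$.

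The paper repairs both issues simultaneously. First, it interleaves the arithmetic blocks (of \emph{consecutive integers}, length $\asymp 2^j/f(2^j)^{2/3}$) with much longer \emph{geometric}-progression blocks (length $\asymp 2^j/f(2^j)^{1/3}$) that serve as low-energy filler; this decouples the arithmetic block length from the total count and introduces two free exponents $\beta,\gamma$, whose optimisation $(\beta,\gamma)=(2/3,1/3)$ matches the energy, the representation count, and the scale simultaneously. Second, because the arithmetic blocks consist of consecutive integers, the relevant differences are $1,2,3,\ldots$, and the metric input is not one event $\|\alpha d_k\|<\psi_k$ per scale but the whole family of approximations $|\alpha - p/n| < \psi(i)$ with $n$ ranging over an interval $[b_j,B_j]$; these rationals are shown to form an optimal regular system, and Beresnevich's Khintchine--Jarn\'ik theorem then gives full measure (resp.\ the stated Hausdorff dimension) for the $\limsup$ set precisely when $\sum 1/(nf(n))$ diverges (resp.\ converges). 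The many-denominators-per-scale feature is exactly what allows the event measures to sum to infinity, which your single-$d_k$ events cannot do.
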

We record an immediate consequence of Theorem \ref{thm: lowering the known Energy threshold}
by using the convention that the $r$-folded iterated logarithm is
denoted by $\log_{r}\left(x\right)$, i.e. $\log_{r}\left(x\right)\coloneqq\log_{r-1}\left(\log\left(x\right)\right)$
and $\log_{1}\left(x\right)\coloneqq\log\left(x\right)$.
\begin{cor}
\label{cor: order of magnitude for the additive energy of the sequence of counter examples}Let
$r$ be a positive integer. Then, there is a strictly increasing sequence
$\left(a_{n}\right)_{n}$ of positive integers with 
\[
E\left(A_{N}\right)=\Theta\left(\frac{N^{3}}{\log\left(N\right)\log_{2}\left(N\right)\ldots\log_{r}\left(N\right)}\right)
\]
such that $\NPPC\left(\left(a_{n}\right)_{n}\right)$ has full Lebesgue
measure. Moreover, for any $\varepsilon>0$ there is a strictly increasing
sequence $\left(a_{n}\right)_{n}$ of positive integers with 
\[
E\left(A_{N}\right)=\Theta\left(\frac{\left(\log_{r}\left(N\right)\right)^{-\varepsilon}N^{3}}{\log\left(N\right)\log_{2}\left(N\right)\ldots\log_{r}\left(N\right)}\right)
\]
such that $\NPPC\left(\left(a_{n}\right)_{n}\right)$ has full Hausdorff
dimension.
\end{cor}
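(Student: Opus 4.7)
The plan is to deduce Corollary \ref{cor: order of magnitude for the additive energy of the sequence of counter examples} directly from Theorem \ref{thm: lowering the known Energy threshold} by feeding it two explicit polylogarithmic choices of $f$; essentially no new idea is needed beyond verifying hypotheses. For the first assertion (full Lebesgue measure), I would take
\[
f(x) \coloneqq \log(x)\cdot \log_2(x)\cdots\log_r(x),
\]
adjusted on a bounded initial interval so that $f\colon\mathbb{R}_{>0}\to\mathbb{R}_{>2}$ is monotonically increasing to infinity. The growth bound $f(x)=\mathcal{O}\bigl((\log x)^{-7/3}x^{1/3}\bigr)$ is trivial since $f$ grows only polylogarithmically. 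The key verification is that $\sum_n 1/(n f(n))$ diverges, which is the classical iterated-logarithm test: by the integral test the antiderivative of $1/(x\log x\cdots\log_r x)$ is $\log_{r+1}(x)\to\infty$. Theorem \ref{thm: lowering the known Energy threshold} then yields a strictly increasing sequence with $E(A_N)=\Theta(N^3/f(N))$ for which the pair correlation function blows up for almost every $\alpha$, so that $\NPPC\bigl((a_n)_n\bigr)$ has full Lebesgue measure.

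For the second assertion (full Hausdorff dimension), I would take
\[
f(x)\coloneqq(\log_r x)^{\varepsilon}\cdot\log(x)\cdot \log_2(x)\cdots\log_r(x),
\]
again with cosmetic adjustments on a bounded initial interval. Three hypotheses of the theorem must be checked. First, $\sum_n 1/(nf(n))$ must converge: the same integral test now produces a convergent improper integral thanks to the extra $(\log_r x)^{\varepsilon}$ factor. Second, the doubling condition $\sup_{x\geq x_0} f(2x)/f(x)<2$ must hold on a tail, which is immediate because $f$ is polylogarithmic, giving $f(2x)/f(x)\to 1$. Third, the lower order of infinity satisfies $\lambda(f)=\liminf_{x\to\infty}\log f(x)/\log x = 0$, since $\log f(x)=\mathcal{O}(\log_2 x)$ is $o(\log x)$. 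Theorem \ref{thm: lowering the known Energy threshold} then hands me a sequence with the claimed additive-energy asymptotics such that $\NPPC\bigl((a_n)_n\bigr)$ has Hausdorff dimension at least $1/(1+\lambda)=1$, i.e.\ full Hausdorff dimension.

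The only minor point requiring care, and it is purely bookkeeping, is to ensure the monotonicity, the positivity $f>2$, and the doubling condition hold \emph{globally} on $\mathbb{R}_{>0}$ rather than merely on a tail; this is achieved by trivial modifications on a compact initial interval, which affect neither the asymptotic order of $f$, nor the convergence/divergence of $\sum_n 1/(nf(n))$, nor $\lambda(f)$. There is accordingly no substantive obstacle: the entire content of the corollary is packaged in Theorem \ref{thm: lowering the known Energy threshold}, and the proof is a translation from the functional hypotheses of that theorem into the concrete iterated-logarithm asymptotics of the energy.
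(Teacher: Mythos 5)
Your proposal is correct and takes precisely the approach the paper intends: the paper presents the corollary as an ``immediate consequence'' of Theorem \ref{thm: lowering the known Energy threshold}, and your verification of the hypotheses for $f(x)=\log(x)\cdots\log_r(x)$ (divergence case, full measure) and $f(x)=(\log_r x)^{\varepsilon}\log(x)\cdots\log_r(x)$ (convergence case, $\lambda(f)=0$, doubling condition) is exactly the intended plugging-in. The bookkeeping remark about adjusting $f$ on a compact initial segment to meet the global hypotheses is a reasonable and harmless precaution.
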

The proof of Theorem \ref{thm: lowering the known Energy threshold}
connects the metric PPC property to the notion of ``optimal regular
systems'' from Diophantine approximation. It uses, among other things,
a Khintchine-type theorem due to Beresnevich. Furthermore, despite
leading to better bounds, the nature of the sequences underpinning
Theorem \ref{thm: lowering the known Energy threshold} is much simpler
than the nature of those sequences previously constructed by Bourgain
\cite{Aistleitner Larcher Lewko: Additive Energy and the Hausdorff Dimension of the Exceptional Set in Metric Pair Correlation Problems}
(who used, inter alia, large deviations inequalities form a probability
theory), or the sequence of prime numbers studied by Walker \cite{Walker: The Primes are not Metric Poissonian}
(who relied on estimates, derived by the circle-method, on the exceptional
set in Goldbach-like problems). \\
\\
In the converse direction, there has been remarkable progress, due
to a work of Bloom, Chow, Gafni, Walker - who improved under the assumption
that the sequence is not ``too sparse'' the power saving bound (\ref{eq: Aistleitner bound})
to a saving of a little more than the square of a logarithm. More
precisely, their result is as follows.
\begin{namedthm}[Theorem B (Bloom, Chow, Gafni, Walker \cite{Bloom-Chow-Gafni-Walker: Additive Energy and the Metric Poissonian}]
\label{thm: Bloom, Chow, Gafni, Walker theorem}Let $\left(a_{n}\right)_{n}$
be a strictly increasing sequence of positive integers. Suppose there
is an $\varepsilon>0$ and a $C=C\left(\varepsilon\right)>0$ such
that 
\[
E\left(A_{N}\right)=\mathcal{O}_{\varepsilon}\left(\frac{N^{3}}{\left(\log N\right)^{2+\varepsilon}}\right),\qquad\delta\left(N\right)\geq\frac{C}{\left(\log N\right)^{2+2\varepsilon}}
\]
where $\delta\left(N\right)\coloneqq N^{-1}\#\left(A_{N}\cap\left\{ 1,\ldots,N\right\} \right)$.
Then, $\left(a_{n}\right)_{n}$ has metric PPC.
\end{namedthm}

\section{First main theorem}

Let us give an outline of the proof of Theorem \ref{thm: full measure of set of counterexamples}.
For doing so, we begin by sketching the reasoning of Theorem A: As
it turns out, except for a set of neglectable measure, the counting
function in (\ref{eq: definition of the Pair Correlation Counting function})
can be written as a function that admits a non-trivial estimate for
its $L^{1}$-mean value. The $L^{1}$-mean value is infinitely often
too small on sets whose measure is uniformly bounded from below. Thus,
there exists a sequence of set $\left(\Omega_{r}\right)_{r}$ of $\alpha\in\left[0,1\right]$
such that $R\left(\left[-s,s\right],\alpha,N\right)$ is too small
for every $\alpha\in\Omega_{r}$ for having PPC and Theorem A follows.

Our reasoning for proving Theorem \ref{thm: full measure of set of counterexamples}
is building upon this argument of Bourgain while we introduce new
ideas to construct a sequence of sets $\left(\Omega_{r}\right)_{r}$
that are ``quasi (asymptotically) independent'' - meaning that for
every fixed $t$ the relation $\lambda(\Omega_{r}\cap\Omega_{t})\leq\lambda(\Omega_{r})\lambda(\Omega_{t})+o\left(1\right)$
holds as $r\rightarrow\infty$. Roughly speaking, applying a suitable
version of the Borel-Cantelli lemma, combined with a sufficiently
careful treatment of the $o\left(1\right)$ term, will then yield
Theorem \ref{thm: full measure of set of counterexamples}. However,
before proceeding with the details of the proof we collect in the
next paragraph some tools from additive combinatorics that are needed. 

\subsection{Preliminaries}

We start with a well-know result relating, in a quantitative manner,
the additive energy of a set of integers with the existence of a (relatively)
dense subset with small difference set where the difference set $B-B\coloneqq\left\{ b-b':\,b,b'\in B\right\} $
for a set $B\subseteq\mathbb{R}$. 
\begin{lem}[{Balog-Szeméredi-Gowers lemma, \cite[Thm 2.29]{Tao Vu: Additive combinatorics}}]
\label{Balog-Szem=0000E9redi-Gowers}Let $A\subseteq\mathbb{Z}$
be a finite set of integers. For any $c>0$ there exist $c_{1},c_{2}>0$
depending only on $c$ such that the following holds. If $E(A)\geq c\left(\#A\right)^{3}$,
then there is a subset $B\subseteq A$ such that 
\begin{enumerate}
\item $\#B\geq c_{1}\#A,$
\item $\#\left(B-B\right)\leq c_{2}\#A.$
\end{enumerate}
\end{lem}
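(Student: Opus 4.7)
The plan is to follow the standard graph-theoretic proof of the Balog--Szemer\'edi--Gowers lemma. First, I would re-encode the hypothesis $E(A) \geq c (\#A)^3$ as edge-density information. Writing $r(x) \coloneqq \#\{(a,b) \in A \times A : a+b = x\}$, one has $\sum_x r(x) = (\#A)^2$ and $\sum_x r(x)^2 = E(A) \geq c(\#A)^3$. A popularity (Cauchy--Schwarz) argument then produces a set $X$ of ``popular'' sums, each satisfying $r(x) \gg \#A$, such that the pairs $(a,b) \in A \times A$ with $a+b \in X$ form a positive proportion of $A \times A$. Viewing these pairs as edges of a graph $G$ on vertex set $A$ yields a graph whose edge density is bounded below by a constant depending only on $c$.

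Second, I would invoke the combinatorial heart of the argument, the graph-theoretic form of the Balog--Szemer\'edi--Gowers lemma: from a graph $G$ of positive edge density on $A$, one can extract a subset $B \subseteq A$ with $\#B \gg \#A$ such that for every pair $(b,b') \in B \times B$ the number of length-three paths $b \to x \to y \to b'$ in $G$ is $\gg (\#A)^2$. The standard proof uses \emph{dependent random choice}: one samples two random vertices $v,w$ and keeps only the vertices with many common $G$-neighbours to both $v$ and $w$; a first-moment calculation locates a good pair $(v,w)$ producing a large $B$ with uniformly many length-three paths between its elements.

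Third, I would transfer the graph-theoretic output back into additive combinatorics. For $b,b' \in B$ and each length-three path $b \to x \to y \to b'$ in $G$ one has the telescoping identity $b + b' = (b+x) - (x+y) + (y+b')$, in which each of the three sums on the right-hand side is popular, i.e.\ admits $\gg \#A$ representations as an element of $A + A$. Combining the path-abundance $\gg (\#A)^2$ with the popularity of the edges, every $s \in B+B$ acquires many representations as an additive configuration drawn from $A$; comparing the resulting lower bound with the trivial total count of such configurations in $A^{\ast}$ forces $\#(B+B) \ll \#A$. A single application of the Pl\"unnecke--Ruzsa inequality then converts this into $\#(B-B) \ll \#A$, as required.

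The main technical obstacle is the second step: producing a \emph{single} subset $B$ on which \emph{every} pair $(b,b')$ simultaneously enjoys many length-three paths is the key innovation of Gowers over the original Balog--Szemer\'edi argument, and is the place where the probabilistic dependent-random-choice input is indispensable. Beyond that, the remaining difficulty is purely bookkeeping: tracking the dependence of the final constants $c_1$ and $c_2$ on $c$ through all three reductions (popularity cut-off, graph lemma, Pl\"unnecke--Ruzsa) so that they depend only on $c$ and on nothing else.
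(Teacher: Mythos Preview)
The paper does not prove this lemma at all: it is quoted verbatim as \cite[Thm~2.29]{Tao Vu: Additive combinatorics} and used as a black box, so there is no ``paper's own proof'' to compare against. Your sketch is a correct outline of the standard Gowers proof (popularity pass, dependent-random-choice graph lemma giving many length-three paths, telescoping to bound a sumset), and is essentially what one finds in Tao--Vu.

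One small simplification you might note: since the conclusion you need concerns $B-B$ rather than $B+B$, it is slightly cleaner to set up the popularity step for \emph{differences} from the start, using $E(A)=\sum_{d}\mathrm{rep}_{A,A}(d)^2$ and declaring $(a,a')$ an edge when $a-a'$ is a popular difference. The same path-telescoping then yields $b-b'=(b-x)-(y-x)+(y-b')$ and bounds $\#(B-B)$ directly, so the final appeal to Pl\"unnecke--Ruzsa becomes unnecessary. This does not affect correctness, only tidiness.
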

Moreover, we recall that for $\delta>0$ and $d\in\mathbb{Z}$ the
set 
\[
B\left(d,\delta\right)\coloneqq\left\{ \alpha\in\left[0,1\right]:\,\left\Vert d\alpha\right\Vert \leq\delta\right\} 
\]
is called Bohr set. These appear frequently in additive combinatorics.
The following two simple observation will be useful.
\begin{lem}
\label{lem: upper estimate for measure of Omega_varepsilon,n}Let
$B\subseteq\mathbb{Z}$ be a finite set of integers. Then,

\[
\lambda\Biggl(\Biggl\{\alpha\in\left[0,1\right]:\underset{d\in\left(B-B\right)\setminus\left\{ 0\right\} }{\min}\left\Vert d\alpha\right\Vert <\frac{\varepsilon}{\#\left(B-B\right)}\Biggr\}\Biggr)\leq2\varepsilon
\]
for every $\varepsilon\in(0,1)$ where $\lambda$ is the Lebesgue
measure. 
\end{lem}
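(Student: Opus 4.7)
The plan is to apply a union bound combined with the standard measure estimate for a single Bohr set. Writing $\delta := \varepsilon/\#(B-B)$ for brevity, the set appearing in the lemma is precisely the union
\[
\bigcup_{d \in (B-B)\setminus\{0\}} \bigl\{\alpha \in [0,1] : \|d\alpha\| < \delta\bigr\},
\]
which differs only by a strict versus non-strict inequality from the Bohr set $B(d,\delta)$ as defined in the paragraph preceding the lemma.

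First I would record the elementary bound that for each nonzero integer $d$ and each $\delta \in (0,1/2)$ the Lebesgue measure of $\{\alpha \in [0,1] : \|d\alpha\| < \delta\}$ is at most $2\delta$. This is immediate because the map $\alpha \mapsto d\alpha \bmod 1$ is $|d|$-to-one and measure preserving from $[0,1]$ onto $\mathbb{R}/\mathbb{Z}$, so the set in question is the preimage of the open $\delta$-neighbourhood of $0$ on the circle, which has measure $2\delta$. Concretely, the set decomposes into arcs centred at the points $k/|d|$ with $k = 0, \ldots, |d|$: the interior arcs have length $2\delta/|d|$ and the two boundary arcs are truncated to half-length, summing to exactly $2\delta$.

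Then I would apply the union bound. Since $0 \in B-B$ automatically, $\#((B-B)\setminus\{0\}) = \#(B-B) - 1$, and hence
\[
\lambda\Biggl(\bigcup_{d \in (B-B)\setminus\{0\}} \bigl\{\alpha : \|d\alpha\| < \delta\bigr\}\Biggr) \leq \bigl(\#(B-B)-1\bigr)\cdot 2\delta < 2\varepsilon.
\]
One should briefly check the edge case $\#(B-B) = 1$ (meaning $B$ is empty or a singleton), where the set of summation is empty and the inequality is vacuous; in the nontrivial case $\#(B-B) \geq 2$ one has $\delta \leq \varepsilon/2 < 1/2$, which is what makes the per-$d$ estimate sharp.

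There is no real obstacle here: the argument is a textbook union bound, with the only mild subtlety being the elementary measure computation for a single Bohr set $B(d,\delta)$.
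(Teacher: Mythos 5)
Your proposal is correct and takes essentially the same approach as the paper: the paper's proof is a one-line union bound over the Bohr sets $B(m-n,\varepsilon/\#(B-B))$ for $m\neq n$ in $B$, asserting each has measure $2\varepsilon/\#(B-B)$, exactly as you do. You simply spell out the single-Bohr-set measure computation and the edge cases ($\#(B-B)=1$, and the check that $\delta<1/2$) that the paper leaves implicit.
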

\begin{proof}
By observing that the set under consideration is contained in
\[
\bigcup_{\underset{m\not=n}{m,n\in B}}B\left(m-n,\frac{\varepsilon}{\#\left(B-B\right)}\right),
\]
and $\lambda\left(B\left(m-n,\frac{\varepsilon}{\#\left(B-B\right)}\right)\right)=\frac{2\varepsilon}{\#\left(B-B\right)}$,
the claim follows at once.
\end{proof}
\begin{lem}
\label{lem: Omega_n has only finitely many connected components}Suppose
$A$ is a finite intersection of Bohr sets, and $B$ is a finite union
of Bohr sets. Then, $A\setminus B$ is the union of finitely many
intervals.
\end{lem}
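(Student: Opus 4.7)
The plan is to reduce the statement to the elementary observation that the class $\mathcal{F}$ of subsets of $[0,1]$ expressible as a finite union of (not necessarily closed, possibly degenerate) intervals is closed under finite intersections, finite unions, and relative complements in $[0,1]$. Given this, it suffices to check that each individual Bohr set lies in $\mathcal{F}$: one then concludes $A\in\mathcal{F}$ and $B\in\mathcal{F}$, so $A\setminus B=A\cap([0,1]\setminus B)\in\mathcal{F}$, and every element of $\mathcal{F}$ has only finitely many connected components.

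For the Bohr set $B(d,\delta)$, the case $d=0$ gives $B(0,\delta)=[0,1]$. For $d\neq 0$ we may, after replacing $d$ by $|d|$, assume $d\geq 1$, in which case the defining condition $\|d\alpha\|\leq\delta$ translates to
\[
\alpha \;\in\; \bigcup_{k=0}^{d}\Bigl[\tfrac{k-\delta}{d},\,\tfrac{k+\delta}{d}\Bigr]\cap[0,1],
\]
a union of at most $d+1$ closed subintervals of $[0,1]$, hence in $\mathcal{F}$.

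The closure properties of $\mathcal{F}$ are entirely routine: the intersection of two intervals is an interval (possibly empty), so closure under pairwise (and thus finite) intersections follows by distributing over the unions defining two elements of $\mathcal{F}$; closure under finite unions is immediate; and the complement in $[0,1]$ of a single interval is a union of at most two intervals, which extends to all of $\mathcal{F}$ via De~Morgan together with the closure under finite intersections already established. Combining these gives the lemma. There is essentially no obstacle beyond bookkeeping of endpoints and handling of the degenerate case $d=0$; the substantive content of the lemma is simply that Bohr sets are piecewise assembled from arithmetic progressions of intervals and therefore behave well under the set-theoretic operations appearing in the statement.
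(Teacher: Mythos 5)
Your argument is correct and complete. Note that the paper itself offers no proof of this lemma at all: it is stated as one of ``two simple observations'' and then used without justification, so there is no in-paper proof to compare against. What you have written is exactly the routine verification the authors are implicitly appealing to, namely that a Bohr set $B(d,\delta)$ is a finite union of closed intervals (at most $|d|+1$ of them, with the degenerate case $d=0$ giving all of $[0,1]$), and that the class of finite unions of intervals in $[0,1]$ is closed under finite intersections, finite unions, and relative complement, so that $A\setminus B = A\cap\bigl([0,1]\setminus B\bigr)$ stays in the class. The explicit decomposition $\{\alpha:\|d\alpha\|\le\delta\}=\bigcup_{k=0}^{d}\bigl[\frac{k-\delta}{d},\frac{k+\delta}{d}\bigr]\cap[0,1]$ and the De Morgan step are both correct, and your remark to allow degenerate and non-closed intervals is the right level of care, since elsewhere the paper also uses Bohr-type sets defined with strict inequalities. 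No gaps.
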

Furthermore, we shall use the Borel-Cantelli lemma in a version due
to Erd\H{o}s-Rényi.
\begin{lem}[Erd\H{o}s-Rényi]
\label{lem: Erdos Renyi version of Borel Cantelli}Let $\left(A_{n}\right)_{n}$
be a sequence of Lebesgue measurable sets in $\left[0,1\right]$ satisfying
\[
\sum_{n\geq1}\lambda\left(A_{n}\right)=\infty.
\]
Then,

\[
\lambda\left(\limsup_{n\rightarrow\infty}A_{n}\right)\geq\limsup_{N\rightarrow\infty}\frac{\left(\sum_{n\leq N}\lambda\left(A_{n}\right)\right)^{2}}{\sum_{m,n\leq N}\lambda\left(A_{n}\cap A_{m}\right)}.
\]
\end{lem}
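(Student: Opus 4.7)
The plan is to derive the inequality from a single application of the Cauchy--Schwarz inequality to partial sums of indicator functions. Set $S_N := \sum_{n\leq N}\mathbf{1}_{A_n}$ and observe that
\[
\int_{[0,1]} S_N \, d\lambda = \sum_{n \leq N}\lambda(A_n), \qquad \int_{[0,1]} S_N^2 \, d\lambda = \sum_{m,n\leq N}\lambda(A_m \cap A_n).
\]
Since $S_N$ is supported on the union $U_N := \bigcup_{n\leq N} A_n$, writing $S_N = S_N\mathbf{1}_{U_N}$ and applying Cauchy--Schwarz yields
\[
\Bigl(\sum_{n\leq N}\lambda(A_n)\Bigr)^{2} = \Bigl(\int S_N\mathbf{1}_{U_N}\, d\lambda\Bigr)^{2} \leq \lambda(U_N) \sum_{m,n\leq N}\lambda(A_m \cap A_n),
\]
so the finite union $U_N$ already admits the desired lower bound.

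The next step is to upgrade this bound from $U_N$ to the $\limsup$ set $\limsup_n A_n = \bigcap_{M\geq 1}\bigcup_{n\geq M} A_n$. For each fixed $M$, the same Cauchy--Schwarz calculation with the sum restricted to indices $M\leq n\leq N$ gives
\[
\lambda\Bigl(\bigcup_{n\geq M} A_n\Bigr) \geq \frac{\bigl(\sum_{n=M}^{N}\lambda(A_n)\bigr)^{2}}{\sum_{m,n=M}^{N}\lambda(A_m\cap A_n)} \geq \frac{\bigl(\sum_{n=M}^{N}\lambda(A_n)\bigr)^{2}}{\sum_{m,n=1}^{N}\lambda(A_m\cap A_n)}.
\]
The divergence hypothesis $\sum_{n\geq 1}\lambda(A_n) = \infty$ implies that $\sum_{n=M}^{N}\lambda(A_n) = (1+o(1))\sum_{n=1}^{N}\lambda(A_n)$ as $N\to\infty$ for each fixed $M$, so taking the $\limsup$ in $N$ of the displayed chain shows that $\lambda(\bigcup_{n\geq M}A_n)$ is bounded below by the right-hand side of the lemma, uniformly in $M$.

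Finally, continuity of the Lebesgue measure from above applied to the nested family $\bigl(\bigcup_{n\geq M}A_n\bigr)_{M}$ (valid since $\lambda([0,1])<\infty$) produces
\[
\lambda\bigl(\limsup_{n\to\infty} A_n\bigr) = \lim_{M\to\infty}\lambda\Bigl(\bigcup_{n\geq M} A_n\Bigr) \geq \limsup_{N\to\infty}\frac{\bigl(\sum_{n\leq N}\lambda(A_n)\bigr)^{2}}{\sum_{m,n\leq N}\lambda(A_m\cap A_n)},
\]
which is the claim. The only subtle point is the replacement of tail sums by full sums in the second step; this is where the divergence hypothesis is genuinely used, and it is the only place where a careless argument could break down, but the replacement reduces to a one-line $o(1)$ estimate.
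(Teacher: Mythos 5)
Your proof is correct, and it is the standard argument for this form of the second Borel--Cantelli lemma (essentially the Chung--Erd\H{o}s inequality applied to the tails, followed by continuity of measure from above). The paper itself cites the lemma without proof, so there is no in-text argument to compare against. The only point worth making explicit, which you correctly flag as the crux, is the justification in the middle step: for each fixed $M$ the inequality $\lambda\bigl(\bigcup_{n\geq M}A_n\bigr)\geq \bigl(\sum_{n=M}^{N}\lambda(A_n)\bigr)^{2}\big/\sum_{m,n\leq N}\lambda(A_m\cap A_n)$ holds for every $N\geq M$, and since the numerator differs from $\bigl(\sum_{n\leq N}\lambda(A_n)\bigr)^{2}$ by a factor $(1+o(1))^2$ as $N\to\infty$ (using $\sum\lambda(A_n)=\infty$), taking $\limsup_{N}$ gives exactly the right-hand side of the lemma, uniformly in $M$; continuity from above then finishes the proof. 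No gaps.
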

Moreover, let us explain the main steps in the proof of Theorem \ref{thm: full measure of set of counterexamples}.
Let $\varepsilon\coloneqq\varepsilon\left(j\right)\coloneqq\frac{1}{10^{j}}c_{1}^{2}$
be for $j\in\mathbb{N}$ where the constant $c_{1}$ is specified
later-on, and fix $j$ for now. In the first part of the argument,
we show how a sequence - that is constructed in the second part of
the argument - with the following crucial (but technical) properties
implies the claim. For every fixed $j$, we find a corresponding $s=s(j)$
and construct a sequence $\left(\Omega_{r}\right)_{r}$ of exceptional
values $\alpha$ satisfying the following properties:
\begin{enumerate}[label={(}\roman{enumi}{)}]
\item \label{enu:Pair correlations functions too small on exceptional set}\label{enu:First property of exceptional sets}For
all $\alpha\in\Omega_{r}$, the pair correlation function admits the
upper bound
\begin{equation}
R\left(\left[-s,s\right],\alpha,N\right)\leq2\tilde{c}s\label{eq: Pair correlations function too small on exceptional set}
\end{equation}
for some absolute constant $\tilde{c}\in\left(0,1\right)$, depending
on $\left(a_{n}\right)$ only. 
\item \label{enu:exceptional sets get upper asymptotically independent}For
all integers $r>t\ge1$, the relation
\begin{equation}
\lambda\left(\Omega_{r}\cap\Omega_{t}\right)\leq\lambda\left(\Omega_{r}\right)\lambda\left(\Omega_{t}\right)+2\varepsilon\lambda\left(\Omega_{t}\right)+\mathcal{O}\left(r^{-2}\right)\label{eq: exceptional sets get upper asymptotically independent}
\end{equation}
holds. 
\item \label{enu:Each exceptional set has only finitely many connected components}Each
$\Omega_{r}$ is the union of finitely many intervals (hence measurable).
\item \label{enu: absolute lower bound for the measure of Omega}\label{enu: last property of exceptional sets}For
all $r\geq1$, the measure $\lambda\left(\Omega_{r}\right)$ is uniformly
bounded from below by
\begin{equation}
\lambda\left(\Omega_{r}\right)\geq\frac{c_{1}^{2}}{8}.\label{eq: absolute lower bound for the measure of Omega}
\end{equation}
\end{enumerate}

\subsection{Proof of Theorem \ref{thm: full measure of set of counterexamples}}

1. Suppose there is $\left(\Omega_{r}\right)_{r}$ satisfying \ref{enu:Pair correlations functions too small on exceptional set}-\ref{enu: absolute lower bound for the measure of Omega}.
Then, by using (\ref{eq: exceptional sets get upper asymptotically independent}),
we get
\begin{align*}
\sum_{r,t\leq N}\lambda\left(\Omega_{r}\cap\Omega_{t}\right) & \le2\sum_{2\leq t\leq N}\,\sum_{1\leq r<t}\left(\lambda\left(\Omega_{r}\right)\lambda\left(\Omega_{t}\right)\right)+2\varepsilon N^{2}+\mathcal{O}\left(N\right)\\
 & \leq\left(\sum_{t\leq N}\lambda\left(\Omega_{t}\right)\right)^{2}+2\varepsilon N^{2}+\mathcal{O}\left(N\right).
\end{align*}
By recalling that $\Omega_{r}=\Omega_{r}\left(\varepsilon\right)=\Omega_{r}\left(j\right)$,
we let $\Omega(j)\coloneqq\limsup_{r\rightarrow\infty}\Omega_{r}$.
By using the inequality above in combination with Lemma \ref{lem: Erdos Renyi version of Borel Cantelli}
and the bound (\ref{eq: absolute lower bound for the measure of Omega}),
we obtain that the set $\Omega(j)$ has measure at least
\begin{align*}
\limsup_{N\rightarrow\infty}\frac{\left(\sum_{r\leq N}\lambda\left(\Omega_{r}\right)\right)^{2}}{\sum_{r,t\leq N}\lambda\left(\Omega_{r}\cap\Omega_{t}\right)} & \geq\limsup_{N\rightarrow\infty}\frac{1}{1+\frac{4\varepsilon N^{2}}{\left(\sum_{r\leq N}\lambda\left(\Omega_{r}\right)\right)^{2}}}\\
 & \geq\limsup_{N\rightarrow\infty}\frac{1}{1+\frac{256}{c_{1}^{2}}\varepsilon}=\frac{1}{1+\frac{256}{c_{1}^{2}}\varepsilon}.
\end{align*}
Note that due to (\ref{eq: Pair correlations function too small on exceptional set})
every $\alpha\in\Omega\left(j\right)$ does not have PCC. Now, letting
$j\rightarrow\infty$ proves the assertion.\\

\noindent2. For constructing $\left(\Omega_{r}\right)_{r}$ with
the required properties, let $c>0$ such that $E\left(A_{N}\right)>cN^{3}$
for infinitely many integers $N$. By choosing an appropriate subsequence
$\left(N_{i}\right)_{i}$ and omitting the subscript $i$ for ease
of notation, $E\left(A_{N}\right)>cN^{3}$ holds for every $N$ occurring
in this proof. Moreover, let $c_{1},c_{2}$ and $B_{N}$ be as in
Lemma \ref{Balog-Szem=0000E9redi-Gowers}, corresponding to the $c$
just mentioned. Arguing inductively, while postponing the base step,\footnote{The bases step uses simplified versions of the arguments exploited
in the induction step, and will therefore be postponed.} we assume that for $1\leq r<R$, and $s=\frac{\varepsilon}{2c_{2}}$
there are sets $\left(\Omega_{r}\right)_{1\leq r<R}$ that satisfy
the properties \ref{enu:First property of exceptional sets}-\ref{enu: last property of exceptional sets}
for all distinct integers $1\leq r,t<R$. Let $N\geq R$. Lemma \ref{lem: upper estimate for measure of Omega_varepsilon,n}
implies that the set $\Omega_{\varepsilon,N}\subseteq[0,1]$ of all
$\alpha\in\left[0,1\right]$ satisfying $\left\Vert \left(r-t\right)\alpha\right\Vert <N^{-1}s$
for some distinct $r,t\in B_{N}$ has measure at most $2\varepsilon$.%
{} Setting
\[
\mathcal{D}_{N}:=\left\{ \left(r,t\right)\in\left(A_{N}\times A_{N}\right)\setminus\left(B_{N}\times B_{N}\right):\,r\not=t\right\} ,
\]
we get for $\alpha\notin\Omega_{\varepsilon,N}$ that 
\[
R\left(\left[-s,s\right],\alpha,N\right)=\frac{1}{N}\#\left\{ \left(r,t\right)\in\mathfrak{\mathcal{D}}_{N}:\,\left\Vert \left(r-t\right)\alpha\right\Vert <N^{-1}s\right\} .
\]
Let $\ell_{R}$ denote the length of the smallest subinterval of $\Omega_{r}$
for $1\leq r<R$, and define $C\left(\Omega_{r}\right)$ to be the
set of subintervals of $\Omega_{r}$. Note that $\ell_{R}>0$, and
$\max_{1\leq r<R}\#C\left(\Omega_{r}\right)<\infty$. We divide $\left[0,1\right)$
into 
\[
P\coloneqq\left\lfloor 1+2\ell_{R}^{-1}R^{2}\max_{1\leq r<R}\#C\left(\Omega_{r}\right)\right\rfloor 
\]
parts $\mathcal{P}_{i}$ of equal lengths, i.e. $\mathcal{P}_{i}\coloneqq\left[\frac{i}{P},\frac{i+1}{P}\right)$
where $i=0,\ldots,P-1$. After writing 
\begin{align}
 & \frac{1}{N}\underset{\mathcal{P}_{i}}{\int}\#\left\{ \left(r,t\right)\in\mathfrak{\mathcal{D}}_{N}:\,\left\Vert \left(r-t\right)\alpha\right\Vert \leq N^{-1}s\right\} \text{d}\alpha\label{eq: counting function integrated on an atom}\\
 & =\frac{1}{N}\underset{\left(r,t\right)\in\mathfrak{\mathcal{D}}_{N}}{\sum}\underset{\mathcal{P}_{i}}{\int}\mathbf{1}_{\left[-\frac{s}{N},\frac{s}{N}\right]}\left(\left\Vert \left(r-t\right)\alpha\right\Vert \right)\text{d}\alpha,\nonumber 
\end{align}
we split the sum into two parts: one part containing differences $\left|r-t\right|>R^{k}P$,
and a second part containing differences $\left|r-t\right|\leq R^{k}P$
where 
\[
k\coloneqq\left\lfloor \frac{1}{\log2}\log\frac{20}{c_{1}^{2}\left(1-2^{-1}c_{1}^{2}\right)s}\right\rfloor +1.
\]
Letting $\mathbf{1}_{B}$ denote the characteristic function of $X\subseteq\left[0,1\right]$,
the Cauchy-Schwarz inequality implies 
\[
\underset{\mathcal{P}_{i}}{\int}\mathbf{1}_{\left[-\frac{s}{N},\frac{s}{N}\right]}\left(\left\Vert \left(r-t\right)\alpha\right\Vert \right)\text{d}\alpha\leq\sqrt{\frac{1}{P}\frac{2s}{N}}.
\]
Since for any $x>0$ there are at most $2xN$ choices of $\left(r,t\right)\in\mathcal{D}_{N}$
such that $\left|r-t\right|\leq x$, we obtain 
\[
\frac{1}{N}\underset{\underset{\left|r-t\right|\leq PR^{k}}{\left(r,t\right)\in\mathcal{D}_{N}}}{\sum}\underset{\mathcal{P}_{i}}{\int}\mathbf{1}_{\left[-\frac{s}{N},\frac{s}{N}\right]}\left(\left\Vert \left(r-t\right)\alpha\right\Vert \right)\text{d}\alpha\leq2PR^{k}\sqrt{\frac{1}{P}\frac{2s}{N}}
\]
which is $\leq P^{-1}R^{-k}$ if $N$ is sufficiently large. Moreover,
for any $\left|r-t\right|>PR^{k}$ we observe that
\[
\underset{\mathcal{P}_{i}}{\int}\mathbf{1}_{\left[-\frac{s}{N},\frac{s}{N}\right]}\left(\left\Vert \left(r-t\right)\alpha\right\Vert \right)\text{d}\alpha\leq\frac{2s}{PN}+\frac{4}{PR^{k}N}
\]
and $\#\mathcal{D}_{N}\leq N^{2}-\bigl(\#B_{N}\bigr)^{2}\leq\tilde{c}N^{2}$
where $\tilde{c}\coloneqq1-c_{1}^{2}$.\textcolor{red}{{} }Therefore,
the mean value (\ref{eq: counting function integrated on an atom})
on $\mathcal{P}_{i}$ of the counting function $R$ is bounded from
above by
\begin{align*}
\frac{1}{N}\left(\#\mathcal{D}_{N}\right)^{2}\left(\frac{2s}{PN}+\frac{4}{PR^{k}N}\right)+\frac{1}{PR^{k}}\leq\frac{2\tilde{c}s}{P}+\frac{5}{PR^{k}}.
\end{align*}
Hence, it follows that the measure of the set $\Delta_{N}\left(i\right)$
of $\alpha\in\mathcal{P}_{i}$ with
\begin{equation}
\frac{1}{N}\#\left\{ \left(r,t\right)\in\mathfrak{\mathcal{D}}_{N}:\,\left\Vert \left(r-t\right)\alpha\right\Vert \leq N^{-1}s\right\} \leq2\left(1-\frac{c_{1}^{2}}{2}\right)s\label{eq: modified counting too small for being Poissonian}
\end{equation}
admits, by the choice of $k$, the lower bound
\begin{equation}
\lambda\left(\Delta_{N}\left(i\right)\right)\geq\frac{1}{P}-\frac{1}{P}\frac{2\tilde{c}s+5R^{-k}}{2\left(1-\frac{c_{1}^{2}}{2}\right)s}\geq\frac{1}{P}\left(\frac{c_{1}^{2}}{2}-\frac{c_{1}^{2}}{8}\right).\label{eq: absolute lower bound for Omega_N on partition}
\end{equation}
Note that $\Delta_{N}\left(i\right)$ is the union of finitely many
intervals, due to Lemma \ref{lem: Omega_n has only finitely many connected components}.
So, we may take $\Delta_{N}'\left(i\right)\subset\Delta_{N}\left(i\right)$
being a finite union of intervals such that $\lambda\left(\Delta_{N}'\left(i\right)\right)$
equals the lower bound in (\ref{eq: absolute lower bound for Omega_N on partition}).
Let 
\[
\Omega_{R}\coloneqq\Omega_{R}\left(N\right)\coloneqq\Delta_{N}\setminus\Omega_{\varepsilon,N}\qquad\mathrm{where}\qquad\Delta_{N}\coloneqq\bigcup_{i=0}^{P-1}\Delta_{N}'\left(i\right).
\]
We are going to show now that $\Omega_{R}$ satisfies the properties
\ref{enu:First property of exceptional sets} - \ref{enu: last property of exceptional sets}.
Now, $\Omega_{R}$ satisfies property \ref{enu: absolute lower bound for the measure of Omega}
with $r=R$ since
\[
\lambda\left(\Omega_{R}\right)\geq\lambda\left(\Delta_{N}\right)-\lambda\left(\Omega_{\varepsilon,N}\right)=\frac{c_{1}^{2}}{2}-\frac{c_{1}^{2}}{8}-2\varepsilon\geq\frac{c_{1}^{2}}{8}.
\]
Furthermore, $\Omega_{R}$ satisfies property \ref{enu:Pair correlations functions too small on exceptional set}
by construction and also property \ref{enu:Each exceptional set has only finitely many connected components}
since all sets involved in the construction of $\Omega_{R}$ were
a finite union of intervals. Let $1\leq r<R$, and $I$ be a subinterval
of $\Omega_{r}$. Then,
\begin{align*}
\lambda\left(I\cap\Delta_{N}\right) & =\sum_{i:\mathcal{P}_{i}\cap I\neq\emptyset}\lambda\left(\mathcal{P}_{i}\cap I\cap\Delta_{N}\right)\\
 & \leq\frac{2}{P}+\sum_{i:\mathcal{P}_{i}\subsetneq I}\lambda\left(\mathcal{P}_{i}\cap\Delta_{N}\right)\\
 & \leq\frac{2}{P}+\sum_{i:\mathcal{P}_{i}\subsetneq I}\lambda\left(\Delta_{N}'\left(i\right)\right).
\end{align*}
By summing over all subintervals $I\in C\left(\Omega_{r}\right)$,
we obtain that 
\begin{align*}
\lambda\left(\Omega_{r}\cap\Delta_{N}\right) & \leq\sum_{I\in C\left(\Omega_{r}\right)}\left(\frac{2}{P}+\sum_{i:\mathcal{P}_{i}\subsetneq I}\lambda\left(\Delta_{N}'\left(i\right)\right)\right)\\
 & \leq\frac{1}{R^{2}}+\sum_{I\in C\left(\Omega_{r}\right)}P\lambda\left(I\right)\frac{\lambda\left(\Omega_{N}\right)}{P}\\
 & =\lambda\left(\Omega_{r}\right)\lambda\left(\Omega_{N}\right)+\frac{1}{R^{2}}.
\end{align*}
We deduce property \ref{enu:exceptional sets get upper asymptotically independent}
from this estimate and Lemma \ref{lem: upper estimate for measure of Omega_varepsilon,n}
via
\begin{align*}
\lambda\left(\Omega_{r}\cap\Omega_{R}\right) & \leq\lambda\left(\Omega_{r}\cap\Delta_{N}\right)\\
 & \leq\lambda\left(\Omega_{r}\right)\left(\lambda\left(\Omega_{N}\right)-\lambda\left(\Omega_{\varepsilon,N}\right)\right)+\frac{1}{R^{2}}+\lambda\left(\Omega_{r}\right)\lambda\left(\Omega_{\varepsilon,N}\right)\\
 & \leq\lambda\left(\Omega_{r}\right)\lambda\left(\Omega_{R}\right)+2\varepsilon\lambda\left(\Omega_{r}\right)+\mathcal{O}\left(R^{-2}\right)
\end{align*}
This concludes the induction step. The only part missing now is the
base step of the induction. For realizing it, let $N$ denote the
smallest integer $m$ with $E\left(A_{m}\right)>cm^{3}$. We replace
$\mathcal{P}_{i}$ in (\ref{eq: counting function integrated on an atom})
by $\left[0,1\right]$ to directly derive 
\[
\int_{0}^{1}\frac{1}{N}\#\left\{ \left(r,t\right)\in\mathfrak{\mathcal{D}}_{N}:\,\left\Vert \left(r-t\right)\alpha\right\Vert \leq N^{-1}s\right\} \mathrm{d}\alpha\leq2\tilde{c}s,
\]
and conclude that the set $\Omega_{1}'$ of $\alpha\in\left[0,1\right]$
satisfying (\ref{eq: modified counting too small for being Poissonian})
has a measure at least $\frac{c_{1}^{2}}{2}$. Thus, $\Omega_{1}\coloneqq\Omega_{1}'\setminus\Omega_{N,\varepsilon}$
has measure at least as large as the right hand side of (\ref{eq: absolute lower bound for the measure of Omega}).
For property (\ref{eq: exceptional sets get upper asymptotically independent})
is nothing to check and that $\Omega_{1}$ is a finite union of intervals
follows from Lemma \ref{lem: Omega_n has only finitely many connected components}
by observing that 
\[
\Omega_{1}'=\bigcap_{d_{1},\ldots,d_{L\left(N\right)}}\left(B\left(d_{1},N^{-1}s\right)^{C}\cup\ldots\cup B\left(d_{L\left(N\right)},N^{-1}s\right)^{C}\right)
\]
where the intersection runs through any set of $L\left(N\right)=\left\lfloor N2\tilde{c}s\right\rfloor $
tuples of differences $d_{i}=r_{i}-t_{i}\neq0$ of components of $\left(r_{i},t_{i}\right)\in\mathcal{D}_{N}$
for $i=1,\ldots,L\left(N\right)$. 

Thus, the proof is complete.

\section{Second main theorem}

The sequences $\left(a_{n}\right)_{n}$ enunciated in Theorem \ref{thm: lowering the known Energy threshold}
are constructed in two steps. In the first step, we concatenate (finite)
blocks, with suitable lengths, of arithmetic progressions to form
a set $P_{A}$. In the second step, we concatenate (finite) blocks,
with suitable lengths, of geometric progressions to form a set $P_{G}$
and then define $a_{n}$ to be the $n$-th element of $P_{A}\cup P_{G}$.
On the one hand, the arithmetic progression like part $P_{A}$ serves
to ensure, due to considerations from metric Diophantine approximation,
the divergence property (\ref{eq: divergence of the Pair Correlation Function})
on a set with full measure or controllable Hausdorff dimension; on
the other hand, the geometric progression like part $P_{G}$ lowers
the additive energy, as much as it can. For doing so, a geometric
block will appear exactly before and after an arithmetic block, and
have much more elements. \\
\\
For writing the construction precisely down, we introduce some notation.
Let henceforth $\left\lfloor x\right\rfloor $ denote the greatest
integer $m$ that is at most $x\in\mathbb{R}$. Suppose trough-out
this section that $f$ is as in Theorem \ref{thm: lowering the known Energy threshold}.
We set $P_{A}^{\left(1\right)}$ to be the empty set while $P_{G}^{\left(1\right)}\coloneqq\left\{ 1,2\right\} $.
Moreover, for $j\geq2$ we let $P_{A}^{\left(j\right)}$ denote the
set of $\bigl\lfloor2^{j}\bigl(f(2^{j})\bigr)^{-\beta}\bigr\rfloor$
consecutive integers that start with $C_{j}=2\max\bigl\{ P_{G}^{(j-1)}\bigr\}$,
and $P_{G}^{\left(j\right)}$ is such that the difference set $P_{G}^{\left(j\right)}-2C_{j}$
is the geometric progression $2^{i}$ for $1\leq i\leq\bigl\lfloor\bigl(f(2^{j})\bigr)^{-\gamma}2^{j}\bigl(1-\bigl(f(2^{j})\bigr)^{\gamma-\beta}\bigr)\bigr\rfloor$
where $0<\gamma<\beta<\nicefrac{3}{4}$ are parameters\footnote{No particular importance should be attached to requiring $\beta<\nicefrac{3}{4}$,
or using ``dyadic steps lengths $2^{j}$''. Doing so is for simplifying
the technical details only - eventually, it will turn out that $\beta=\nicefrac{2}{3}=2\gamma$
is the optimal choice of parameters in this approach. For proving
this to the reader, we leave $\gamma,\beta$ undetermined till the
end of this section.} to be chosen later-on. In this notation, we take
\[
P_{A}\coloneqq\bigcup_{j\geq1}P_{A}^{\left(j\right)},\qquad P_{G}\coloneqq\bigcup_{j\geq1}P_{G}^{\left(j\right)},
\]
and denote by $a_{n}$ the $n$-th smallest element in $P_{A}\cup P_{G}$.
 For $d\in\mathbb{Z}$ and finite sets of integers $X,Y$, we abbreviate
the number of representation of $d$ as a difference of an $x\in X$
and a $y\in Y$ by $\text{rep}_{X,Y}(d)\coloneqq\#\{(x,y)\in X\times Y:\,x-y=d\}$;
observe that 
\begin{equation}
E\left(X\right)=\sum_{d\in\mathbb{Z}}\left(\mathrm{rep}_{X,X}\left(d\right)\right)^{2},\label{eq: additive Energy in terms of number of representation}
\end{equation}
and 
\begin{equation}
R\left(\left[-s,s\right],\alpha,N\right)=\frac{1}{N}\underset{d\neq0}{\sum}\text{rep}_{A_{N},A_{N}}(d)\mathbf{1}_{\left[0,\frac{s}{N}\right]}\left(\left\Vert \alpha d\right\Vert \right).\label{eq: lower bound for counting function fo the pair correlations}
\end{equation}

\subsection{Preliminaries}

We begin to determine the order of magnitude of $E\left(A_{N}\right)$
for the truncations $A_{N}$ of the sequence constructed above. Since
the cardinality of elements in the union of the blocks $P_{G}^{\left(j\right)},P_{A}^{\left(j\right)}$
has about exponential growth, it is reasonable to expect $E\left(A_{N}\right)$
to be of the same order of magnitude as the additive energy of the
last block $P_{G}^{\left(J\right)}\cup P_{A}^{\left(J\right)}$ that
is fully contained in $A_{N}$ - note that $J=J\left(N\right)$; i.e.
to expect the magnitude of $E\bigl(P_{G}^{\left(J\right)}\cup P_{A}^{\left(J\right)}\bigr)$
which is roughly equal to $E\bigl(P_{A}^{\left(J\right)}\bigr)$.
The following proposition verifies this heuristic considerations.
\begin{prop}
\label{prop: additive energy of good-guy-bad-guy sequence}Let $\left(a_{n}\right)_{n}$
be as in the beginning of Section 3, and $f$ be as in one of the
two assertions in Theorem \ref{thm: lowering the known Energy threshold}.
Then, $E\left(A_{N}\right)=\Theta\bigl(N^{3}\bigl(f\bigl(N\bigr)\bigr){}^{-3\left(\beta-\gamma\right)}\bigr)$.
\end{prop}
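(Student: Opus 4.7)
The plan is to show $E(A_N) = \Theta\bigl(N^{3}f(N)^{-3(\beta-\gamma)}\bigr)$ by matching upper and lower bounds. Set $B_{j} := P_A^{(j)} \cup P_G^{(j)}$, $L_A^{(j)} := |P_A^{(j)}|$, $L_G^{(j)} := |P_G^{(j)}|$, and let $J = J(N)$ be the largest index with $B_{1}\cup\cdots\cup B_{J} \subseteq A_{N}$. Since $L_G^{(j)} = \Theta\bigl(2^{j} f(2^{j})^{-\gamma}\bigr)$ dominates $L_A^{(j)}$ and the block sizes grow essentially geometrically in $j$ (because $f$ is slowly varying relative to $2^{j}$, as follows from $f(x) = \mathcal{O}\bigl((\log x)^{-7/3}x^{1/3}\bigr)$), the last block accounts for the bulk of $A_{N}$: one obtains $N = \Theta\bigl(L_G^{(J)}\bigr)$, $f(N) = \Theta\bigl(f(2^J)\bigr)$, and hence $L_A^{(J)} = \Theta\bigl(Nf(N)^{-(\beta-\gamma)}\bigr)$. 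The lower bound $E(A_N) \geq E\bigl(P_A^{(J)}\bigr) = \Theta\bigl((L_A^{(J)})^{3}\bigr) = \Theta\bigl(N^{3} f(N)^{-3(\beta-\gamma)}\bigr)$ is then immediate from the fact that an arithmetic progression of length $L$ has additive energy of order $L^{3}$.

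For the matching upper bound I would decompose additive quadruples $b_{1}+b_{2} = b_{3}+b_{4}$ in $A_{N}^{4}$ by which block each summand lies in. The within-block sum $\sum_{j \leq J+1} E(B_{j})$ is further split by the $P_A^{(j)}/P_G^{(j)}$ membership of each of the four summands. The pure arithmetic contribution is $E(P_A^{(j)}) = \Theta\bigl((L_A^{(j)})^{3}\bigr)$. The pure geometric contribution is $E(P_G^{(j)}) = \mathcal{O}\bigl((L_G^{(j)})^{2}\bigr)$, since subtracting $4C_{j}$ reduces the equation to $2^{i_{1}}+2^{i_{2}} = 2^{i_{3}}+2^{i_{4}}$, which forces the multisets $\{i_{1},i_{2}\}$ and $\{i_{3},i_{4}\}$ to coincide by the uniqueness of binary expansions. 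The mixed $2$-$2$ contribution is bounded via Cauchy-Schwarz by $\bigl(E(P_A^{(j)})E(P_G^{(j)})\bigr)^{1/2}$, and the $3$-$1$ configurations are tightly restricted by the scale gap $\min P_G^{(j)} - \max P_A^{(j)}$, which is of order $C_{j}$ and hence much larger than $L_A^{(j)}$. Using $\beta<3/4$ together with the hypothesis on $f$, every within-block term is $\mathcal{O}\bigl((L_A^{(j)})^{3}\bigr)$, and the geometric growth of $L_A^{(j)}$ in $j$ makes $\sum_{j\leq J+1}(L_A^{(j)})^{3} = \Theta\bigl((L_A^{(J)})^{3}\bigr)$.

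The main obstacle is the cross-block contribution. For a quadruple with block indices not all equal, let $j_{\ast}$ be the maximum block index. The super-exponential spacing $C_{j_{\ast}} = 2\max B_{j_{\ast}-1}$ forces at least two of the summands to lie in $B_{j_{\ast}}$, one on each side of the equation: otherwise the two sides of $b_{1}+b_{2} = b_{3}+b_{4}$ cannot balance modulo the size of $B_{j_{\ast}}$. A second scale comparison confines those two $B_{j_{\ast}}$-representatives to the same sub-part ($P_A^{(j_{\ast})}$ or $P_G^{(j_{\ast})}$), and any residual $3$-in-$B_{j_{\ast}}$ configuration pins down the exponent of the lone $P_G^{(j_{\ast})}$-summand to $\mathcal{O}\bigl(\log L_A^{(j_{\ast})}\bigr)$ choices. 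The reduced equation $b - b'' = b''' - b'$ with $b, b'' \in P_\bullet^{(j_{\ast})}$ and $b', b''' \in B_{1}\cup\cdots\cup B_{j_{\ast}-1}$ is then counted by combining the number of small-magnitude differences in the relevant sub-part (bounded by $2L_A^{(j_{\ast})}$ in the $P_A$-case, and by the sparsity of $\{2^{i_{1}}-2^{i_{2}}\}$ inside $[-\max B_{j_{\ast}-1},\max B_{j_{\ast}-1}]$ in the $P_G$-case) with the $\mathcal{O}(N)$ representations available from the earlier blocks. The resulting geometric $j_{\ast}$-sum is absorbed into $\mathcal{O}\bigl((L_A^{(J)})^{3}\bigr)$, which matches the lower bound and completes the proof.
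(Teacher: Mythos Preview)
Your overall strategy---lower bound from one arithmetic block, upper bound by decomposing quadruples according to block membership and $P_A/P_G$ sub-part---is sound and parallels the paper's reasoning. The within-block analysis is essentially correct (the Cauchy--Schwarz and scale-gap observations do control the mixed and $3$--$1$ terms, though your description of the $3$-in-$B_{j_\ast}$ case is imprecise: not every such configuration has a ``lone $P_G^{(j_\ast)}$-summand'', and the sub-cases with two or three entries in $P_G^{(j_\ast)}$ need their own short argument).

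The genuine gap is in the cross-block count. For the dominant case $b,b''\in P_A^{(j_\ast)}$ and $b',b'''$ in earlier blocks, the number of quadruples is
\[
\sum_{|d|<L_A^{(j_\ast)}}\mathrm{rep}_{P_A^{(j_\ast)},P_A^{(j_\ast)}}(d)\cdot \mathrm{rep}_{E,E}(d),
\qquad E:=B_1\cup\cdots\cup B_{j_\ast-1}.
\]
Your phrase ``combining the number of small-magnitude differences (bounded by $2L_A^{(j_\ast)}$) with the $\mathcal O(N)$ representations available from the earlier blocks'' uses only the trivial estimate $\mathrm{rep}_{E,E}(d)\le|E|=\mathcal O(N)$. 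Under any reading this yields at best $\mathcal O\bigl((L_A^{(j_\ast)})^{2}N\bigr)$ per $j_\ast$, hence $\mathcal O\bigl((L_A^{(J)})^{2}N\bigr)$ in total; since $N/L_A^{(J)}\asymp f(2^{J})^{\beta-\gamma}\to\infty$, this does \emph{not} match the target $\mathcal O\bigl((L_A^{(J)})^{3}\bigr)$. What is actually needed is the sharper bound $\mathrm{rep}_{E,E}(d)=\mathcal O\bigl(L_A^{(j_\ast)}\bigr)$ for each $d$: the earlier geometric blocks contribute at most $\mathcal O(J^{2})$ to $\mathrm{rep}_{E,E}(d)$, while the arithmetic blocks contribute at most $\sum_{i<j_\ast}L_A^{(i)}=\mathcal O\bigl(L_A^{(j_\ast)}\bigr)$ by the geometric growth of $L_A^{(i)}$. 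With this in hand the cross-block sum is $\mathcal O\bigl((L_A^{(j_\ast)})^{3}\bigr)$ and the argument closes.

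The paper sidesteps this bookkeeping by working with the representation decomposition $E(A_N)=\sum_d\mathrm{rep}(d)^2$ and splitting $\mathrm{rep}$ into its $P_A$--$P_A$, $P_G$--$P_G$, and mixed parts across \emph{all} block indices simultaneously. The estimate $\sum_{i\le J}F_i=\mathcal O(F_J)$ from Lemma~\ref{lem: auxiliary lemma for calculating additive energy} then gives $\sum_{i,j\le J}\mathrm{rep}_{P_A^{(i)},P_A^{(j)}}(d)=\mathcal O(F_J)$ uniformly in $d$ and immediately delivers the $\mathcal O(F_J^{3})$ bound---this is exactly the sharpening your argument is missing.
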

\noindent For the proof of Proposition \ref{prop: additive energy of good-guy-bad-guy sequence},
we need the next technical lemma.
\begin{lem}
\label{lem: auxiliary lemma for calculating additive energy}Let $\FJ\coloneqq2^{j}\bigl(f\bigl(2^{j}\bigr)\bigr)^{-\delta}$,
for $j\geq1$ and fixed $\delta\in\left(0,1\right)$, where $f$ is
as in Proposition \ref{prop: additive energy of good-guy-bad-guy sequence}.
Then, $\sum_{i\leq j}F_{i}=\mathcal{O}\bigl(F_{j}\bigr)$ and 
\[
\sum_{d\in\mathbb{Z}}\biggl(\sum_{j,i\leq J}\mathrm{rep}_{P_{G}^{\left(j\right)},P_{A}^{\left(i\right)}}\left(d\right)\biggr)^{2}=\mathcal{O}\left(J^{6}2^{2J}\right).
\]
\end{lem}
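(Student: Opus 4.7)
My plan is to prove the two assertions separately, with the first feeding into the second.

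For $\sum_{i \le j} F_i = O(F_j)$, I would examine the ratio $F_{i+1}/F_i = 2\bigl(f(2^{i+1})/f(2^i)\bigr)^{-\delta}$. The hypotheses of Theorem \ref{thm: lowering the known Energy threshold} on $f$ (monotonicity together with either the explicit condition $\sup_{x \ge x_0} f(2x)/f(x) < 2$ in the second scenario, or the slow growth implied by the divergence of $\sum 1/(nf(n))$ in the first, in tandem with $f(x) = O((\log x)^{-7/3} x^{1/3})$) guarantee that $f(2x)/f(x) \le R$ for some constant $R < 2^{1/\delta}$ and all sufficiently large $x$. Since $\delta < 1$, this yields $F_{i+1}/F_i \ge q := 2R^{-\delta} > 1$ eventually, so $F_i \le F_j q^{-(j-i)}$ for large $i \le j$, and summing a geometric series plus a harmless finite tail for small $i$ gives the desired bound.

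For the double-sum bound, the key structural observation is that each $P_G^{(j)}$ is a translate of the geometric progression $\{2^\ell : 1 \le \ell \le L_j\}$, so its elements differ multiplicatively by at least a factor of $2$. Consequently, any interval of length at most $L$ contains at most $\log_2(1+L) + O(1)$ elements of $P_G^{(j)}$. Writing
\[
\mathrm{rep}_{P_G^{(j)}, P_A^{(i)}}(d) = \#\bigl(P_G^{(j)} \cap (P_A^{(i)}+d)\bigr),
\]
with $P_A^{(i)} + d$ an interval of length $|P_A^{(i)}| \le 2^i \le 2^J$, I deduce $\mathrm{rep}_{P_G^{(j)}, P_A^{(i)}}(d) = O(J)$ uniformly in $j, i \le J$ and $d \in \mathbb{Z}$. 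Summing over the $J^2$ pairs $(j, i)$ then yields $\sup_d S(d) = O(J^3)$, where $S(d)$ denotes the inner double sum appearing in the lemma.

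To finish, I would combine $\sum_d S(d)^2 \le \sup_d S(d) \cdot \sum_d S(d)$ with the identity $\sum_d \mathrm{rep}_{X,Y}(d) = \#X \cdot \#Y$, which gives
\[
\sum_d S(d) = \Bigl(\sum_{j \le J} |P_G^{(j)}|\Bigr)\Bigl(\sum_{i \le J} |P_A^{(i)}|\Bigr).
\]
Each factor is $O(2^J)$ by the first part of the lemma applied with $\delta = \gamma$ and $\delta = \beta$ respectively (together with $f \ge 2$), so $\sum_d S(d)^2 = O(J^3 \cdot 2^{2J})$, comfortably within the claimed $O(J^6 \cdot 2^{2J})$. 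The main obstacle I anticipate is carefully verifying the regularity of $f$ needed for the first claim from the precise hypotheses of Theorem \ref{thm: lowering the known Energy threshold}; once that is in hand, the remaining counting argument is routine.
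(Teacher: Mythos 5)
Your overall strategy matches the paper's in both parts, though your argument for the second claim is cleaner and slightly sharper. You observe, as the paper does, that $\mathrm{rep}_{P_G^{(j)},P_A^{(i)}}(d)=O(J)$ uniformly (since $P_A^{(i)}+d$ is an interval of length at most $2^J$ meeting the translated geometric progression $P_G^{(j)}$ in $O(J)$ points), giving $\sup_d S(d)=O(J^3)$. But then, instead of multiplying $\sup_d S(d)^2=O(J^6)$ by the $O(2^{2J})$ count of the support as the paper does, you combine $\sum_d S(d)^2\le\bigl(\sup_d S(d)\bigr)\sum_d S(d)$ with the identity $\sum_d\mathrm{rep}_{X,Y}(d)=\#X\cdot\#Y$, which yields the tighter $O(J^3 2^{2J})$. (For $\sum_{j\le J}\#P_G^{(j)}=O(2^J)$ and $\sum_{i\le J}\#P_A^{(i)}=O(2^J)$ the trivial bounds $\#P_G^{(j)}\le 2^j$, $\#P_A^{(i)}\le 2^i$ already suffice, so you do not need the first claim at that point.)

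The genuine soft spot --- which you yourself flag as the main obstacle --- is the first claim in the divergent scenario. It is not true that divergence of $\sum 1/(nf(n))$ together with $f(x)=O\bigl((\log x)^{-7/3}x^{1/3}\bigr)$ forces $f(2x)/f(x)\le R$ for some $R<2^{1/\delta}$ for all large $x$: a monotone $f$ that stays constant over extremely long intervals and then jumps by an arbitrarily large factor can satisfy both conditions while $f(2x)/f(x)$ is unbounded, and for such $f$ one gets $F_{j-1}\gg F_j$ at the jump indices, so $\sum_{i\le j}F_i=O(F_j)$ actually fails. The paper's own handling of this case is similarly delicate: it deduces $\lim_j f(2^{j+1})/f(2^j)=1$ from the assertion that $\bigl(f(2^j)/f(2^{j+1})\bigr)_j$ is non-decreasing, which does not follow from the stated hypotheses. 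A clean repair, covering all the intended examples $\log(N)\log_2(N)\cdots\log_r(N)$, is to impose a doubling condition $\limsup_{x\to\infty}f(2x)/f(x)<2$ in the divergent scenario as well; with that, your geometric-series estimate for $\sum_{i\le j}F_i$ goes through exactly as you wrote it.
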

\begin{proof}
Suppose that $f\left(x\right)=\mathcal{O}\bigl(x^{\nicefrac{1}{3}}\left(\log x\right)^{-\nicefrac{7}{3}}\bigr)$
is such that (\ref{eq: divergence of the reciprocal of (f(n) times n)})
diverges. Because
\[
\sum_{j\leq J+1}\frac{1}{f\bigl(2^{j}\bigr)}\geq\sum_{k\leq2^{J}}\frac{1}{kf\left(k\right)}
\]
diverges as $J\rightarrow\infty$ and $\left(f\bigl(2^{j}\bigr)/f\bigl(2^{j+1}\bigr)\right)_{j}$
is non-decreasing, we conclude that $\lim_{j\rightarrow\infty}\bigl(f\bigl(2^{j}\bigr)/f\bigl(2^{j+1}\bigr)\bigr)=1$.
Therefore, there is an $i_{0}$ such that the estimate $\bigl(f\bigl(2^{i}\bigr)\bigr)^{-1}f\bigl(2^{i+h}\bigr)<\bigl(\nicefrac{3}{2}\bigr)^{\frac{h}{\delta}}$
holds for any $i\geq i_{0}$ and $h\in\mathbb{N}$. Hence, 
\[
\frac{1}{F_{j}}\sum_{i\leq j}F_{i}\leq o\left(1\right)+\sum_{i_{0}\leq i\leq j}2^{i-j}\left(\frac{3}{2}\right)^{j-i}=\mathcal{O}\bigl(1\bigr).
\]
If $f$ is such that (\ref{eq: divergence of the reciprocal of (f(n) times n)})
converges and $f\left(2x\right)\leq\left(2-\varepsilon\right)f\left(x\right)$
for $x$ large enough, then we obtain by a similar argument that $\sum_{i\leq j}F_{i}$
is in $\mathcal{O}\bigl(F_{j}\bigr)$. Furthermore, $\mathrm{rep}_{P_{G}^{\left(j\right)},P_{A}^{\left(i\right)}}\left(d\right)=\mathcal{O}\left(i\right)$,
for every $j\geq1$, and non-vanishing for $\mathcal{O}\bigl(2^{2j}\bigr)$
values of $d$ which implies the last claim.
\end{proof}
We can now prove the proposition.
\begin{proof}[Proof of Proposition \ref{prop: additive energy of good-guy-bad-guy sequence}]
Let $\FJ=2^{j}\bigl(f\bigl(2^{j}\bigr)\bigr)^{-\beta}$, $N\geq1$
be large and denote by $J=J\left(N\right)\geq0$ the greatest integer
$j$ such that $P_{G}^{\left(j-1\right)}\subseteq A_{N}$. Since 
\[
E\bigl(A_{N}\bigr)\geq E\bigl(P_{A}^{\left(J-1\right)}\bigr)=\Omega\bigl(N^{3}\bigl(f\bigl(N\bigr)\bigr){}^{-3\left(\beta-\gamma\right)}\bigr),
\]
it remains to show that $E\bigl(A_{N}\bigr)=\mathcal{O}\bigl(N^{3}\bigl(f\bigl(N\bigr)\bigr){}^{-3\left(\beta-\gamma\right)}\bigr)$.
By exploiting (\ref{eq: additive Energy in terms of number of representation}),
\[
E\bigl(A_{N}\bigr)\leq\sum_{d\in\mathbb{Z}}\bigl(\mathrm{rep}_{A_{T_{J}},A_{T_{J}}}\left(d\right)\bigr)^{2}\quad\text{where}\quad T_{J}\coloneqq\#\bigcup_{j\leq J}\left(P_{A}^{\left(j\right)}\cup P_{G}^{\left(j\right)}\right).
\]
Moreover, $\mathrm{rep}_{A_{T_{J}},A_{T_{J}}}\left(d\right)=S_{1}\left(d\right)+S_{2}\left(d\right)$
where $S_{1}\left(d\right)$ abbreviates the mixed sum $\sum_{i,j\leq J}\bigl(\mathrm{rep}_{P_{A}^{\left(j\right)},P_{G}^{\left(i\right)}}\left(d\right)+\mathrm{rep}_{P_{G}^{\left(i\right)},P_{A}^{\left(j\right)}}\left(d\right)\bigr)$
and $S_{2}\left(d\right)$ abbreviates the sum $\sum_{i,j\leq J}\bigl(\mathrm{rep}_{P_{G}^{\left(i\right)},P_{G}^{\left(j\right)}}\left(d\right)+\mathrm{rep}_{P_{A}^{\left(i\right)},P_{A}^{\left(j\right)}}\left(d\right)\bigr)$.
Using that for any real numbers $a,b$ the inequality $\left(a+b\right)^{2}\leq2\bigl(a^{2}+b^{2}\bigr)$
holds, we obtain 
\[
E\bigl(A_{N}\bigr)=\mathcal{O}\biggl(\sum_{d\in\mathbb{Z}}\bigl(S_{1}\left(d\right)\bigr)^{2}+\sum_{d\in\mathbb{Z}}\bigl(S_{2}\left(d\right)\bigr)^{2}\biggr).
\]
Lemma \ref{lem: auxiliary lemma for calculating additive energy}
implies that $\sum_{d\in\mathbb{Z}}\bigl(S_{2}\left(d\right)\bigr)^{2}=\mathcal{O}\bigl(\left(\log N\right)^{6}N^{2}\bigr)$
due to $J=\mathcal{O}\left(\log N\right)$. Moreover, we note that
$\mathrm{rep}_{P_{A}^{\left(i\right)},P_{A}^{\left(j\right)}}\left(d\right)$
is non-vanishing for at most $4F_{J}$ values of $d$ as $i,j\leq J$.
Since $\mathrm{rep}_{P_{A}^{\left(i\right)},P_{A}^{\left(j\right)}}\left(d\right)\leq F_{\min\left(i,j\right)}$
holds, we deduce that
\[
\sum_{i,j\leq J}\mathrm{rep}_{P_{A}^{\left(i\right)},P_{A}^{\left(j\right)}}\left(d\right)=\mathcal{O}\biggl(\sum_{j\leq J}\sum_{i\leq j}F_{i}\biggr).
\]
By Lemma \ref{lem: auxiliary lemma for calculating additive energy},
the right hand side is in $\mathcal{O}\bigl(F_{J}\bigr)$. Since $\mathrm{rep}_{P_{G}^{\left(i\right)},P_{G}^{\left(j\right)}}\left(d\right)\leq1$,
where $i,j\leq J$, is non-vanishing for at most $\mathcal{O}\bigl(T_{J}^{2}\bigr)=\mathcal{O}\left(N^{2}\right)$
values of $d$, we obtain that 
\[
\sum_{d\in\mathbb{Z}}\bigl(S_{1}\left(d\right)\bigr)^{2}=\mathcal{O}\bigl(F_{J}^{3}+\left(\log N\right)^{6}N^{2}\bigr)
\]
which is in $\mathcal{O}\bigl(N^{3}\bigl(f\bigl(N\bigr)\bigr){}^{-3\left(\beta-\gamma\right)}\bigr)$.
Hence, $E\bigl(A_{N}\bigr)=\mathcal{O}\bigl(N^{3}\bigl(f\bigl(N\bigr)\bigr){}^{-3\left(\beta-\gamma\right)}\bigr)$.
\end{proof}
For estimating the measure or the Hausdorff dimension of $\NPPC\left(\left(a_{n}\right)_{n}\right)$
from below, we recall the notion of an optimal regular system. This
notion, roughly speaking, describes sequences of real numbers that
are exceptionally well distributed in any subinterval, in a uniform
sense, of a fixed interval.
\begin{defn*}
Let $J$ be a bounded real interval, and $S=\left(\alpha_{i}\right)_{i}$
a sequence of distinct real numbers. $S$ is called an optimal regular
system in $J$ if there exist constants $c_{1},\,c_{2},\,c_{3}>0$
- depending on $S$ and $J$ only - such that for any $I\subseteq J$
there is an index $Q_{0}=Q_{0}\left(S,I\right)$ such that for any
$Q\geq Q_{0}$ there are indices
\begin{equation}
c_{1}Q\leq i_{1}<i_{2}<\ldots<i_{t}\leq Q\label{eq: property of c1 in optimal regular system definition}
\end{equation}
satisfying $\alpha_{i_{h}}\in I$ for $h=1,\ldots,t$, and
\begin{equation}
\left|\alpha_{i_{h}}-\alpha_{i_{\ell}}\right|\geq\frac{c_{2}}{Q}\label{eq: property of c2 in optimal regular system definition}
\end{equation}
for $1\leq h\neq\ell\leq t$, and 
\begin{equation}
c_{3}\lambda\left(I\right)Q\leq t\leq\lambda\left(I\right)Q.\label{eq: property of c3 in optimal regular system definition}
\end{equation}
\end{defn*}
Moreover, we need the following result(s) due to Beresnevich which
may be thought of as a far reaching generalization of Khintchine's
theorem, and Jarník-Besicovitch theorem in Diophantine approximation.
\begin{thm}[{\cite[Thm. 6.1, Thm. 6.2]{Bugeaud: Approximation by algebraic numbers}}]
\label{thm: Khintchine a la Victor}Suppose $\psi:\mathbb{\mathbb{R}}_{>0}\rightarrow\mathbb{R}_{>0}$
is a continuous, non-increasing function, and $S=\bigl(\alpha_{i}\bigr)_{i}$
an optimal regular system in $\left(0,1\right)$. Let $\mathcal{K}_{S}\left(\psi\right)$
denote the set of $\xi$ in $\left(0,1\right)$ such that $\left|\xi-\alpha_{i}\right|<\psi\left(i\right)$
holds for infinitely many $i$. If
\begin{equation}
\sum_{n\geq1}\psi\left(n\right)\label{eq: sum over psi values}
\end{equation}
diverges, then $\mathcal{K}_{S}\left(\psi\right)$ has full measure.\\
Conversely, if (\ref{eq: sum over psi values}) converges, then $\mathcal{K}_{S}\left(\psi\right)$
has measure zero and the Hausdorff dimension equals the reciprocal
of the lower order of $\frac{1}{\psi}$ at infinity. 
\end{thm}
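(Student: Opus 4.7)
\emph{Plan.} The theorem splits along the standard Borel--Cantelli dichotomy for limsup sets, and I would dispatch the ``easy'' halves first. Observe that $\mathcal{K}_S(\psi)=\limsup_i B(\alpha_i,\psi(i))$ with $\lambda(B(\alpha_i,\psi(i)))=2\psi(i)$, so when $\sum\psi(n)$ converges the first Borel--Cantelli lemma yields $\lambda(\mathcal{K}_S(\psi))=0$ at once. For the matching upper bound on the Hausdorff dimension, the tails $\bigcup_{i\geq N}B(\alpha_i,\psi(i))$ form an $s$-cover of $\mathcal{K}_S(\psi)$ for every $N$; writing $\lambda=\lambda(1/\psi)$, the definition of lower order yields $\psi(i)\leq i^{-\lambda+\varepsilon}$ for all large $i$, so $\sum_i \psi(i)^s<\infty$ whenever $s>1/(\lambda-\varepsilon)$, which forces $\dim_H \mathcal{K}_S(\psi)\leq 1/\lambda$ after letting $\varepsilon\to 0$.

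For the matching dimension lower bound I would build a generalised Cantor set $K\subseteq\mathcal{K}_S(\psi)$ using the three constants $c_1,c_2,c_3$ of the optimal regular system. Fix $s<1/\lambda$ and a tiny seed interval $I_0\subset(0,1)$. Inductively, inside each level-$k$ interval $J_k$ (of length comparable to $\psi(Q_k)$ and centred at some $\alpha_{i_k}$), apply the defining property with a carefully chosen $Q_{k+1}\gg Q_k$ to extract $\geq c_3\lambda(J_k)Q_{k+1}$ centres $\alpha_{i_h}\in J_k$ with indices in $[c_1 Q_{k+1},Q_{k+1}]$ and mutual separation $\geq c_2/Q_{k+1}$, and declare the level-$(k+1)$ intervals to be the balls $B(\alpha_{i_h},\psi(i_h))$. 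A uniform probability measure over the children, combined with the mass distribution principle and a choice of $Q_k$ so that $\psi(Q_{k+1})^s$ balances the per-step splitting ratio, then certifies $\dim_H K\geq s$; taking $s\uparrow 1/\lambda$ finishes the dimension statement.

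The divergence half I would reduce to showing a uniform local lower bound $\lambda(\mathcal{K}_S(\psi)\cap I)\geq c\lambda(I)$ on every subinterval $I\subseteq(0,1)$, which already suffices by a Lebesgue density argument (a limsup set with uniformly positive density on every subinterval must have full Lebesgue measure). Given $I$ and $Q_0=Q_0(S,I)$, the regular-system property produces, for each $Q\geq Q_0$, at least $c_3\lambda(I)Q$ centres in $I$ with indices in $[c_1 Q,Q]$ and pairwise spacing $\geq c_2/Q$. Using dyadic blocks $Q_k=2^k$, the monotonicity of $\psi$, and the divergence assumption, the balls from block $k$ are disjoint once $\psi(Q_k)\leq c_2/(2Q_k)$ and contribute total length $\gtrsim\lambda(I)Q_k\psi(Q_k)$ inside $I$, with $\sum_k Q_k\psi(Q_k)=\infty$. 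Applying the Chung--Erd\H{o}s / quasi-independence lemma (in the spirit of Lemma~\ref{lem: Erdos Renyi version of Borel Cantelli}) to the block-events ``$\xi$ belongs to some ball of block $k$'', with cross-block pairwise intersections controlled by the spacing $c_2/Q_k$, yields the desired lower bound and thereby $\lambda(\mathcal{K}_S(\psi))=1$.

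The main obstacle is the Cantor construction behind the dimension lower bound: the levels $Q_k$ must simultaneously exploit the spacing $c_2/Q_{k+1}$ to keep children disjoint, respect the index window $[c_1 Q_{k+1},Q_{k+1}]$ so that all radii $\psi(i_h)$ at a given level are comparable, and balance the splitting ratio against $\psi(Q_k)^s$ so that Frostman's mass-distribution condition holds uniformly across levels. The only flexibility available for this tuning is the asymptotic information carried by the lower order of $1/\psi$, and extracting the sharp value $1/\lambda$ (rather than a slightly smaller exponent) is the delicate step that ties the whole argument together.
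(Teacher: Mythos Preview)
The paper does not prove this statement at all: Theorem~\ref{thm: Khintchine a la Victor} is quoted verbatim from Bugeaud's book (Theorems~6.1 and~6.2 there) and used as a black box in the proof of Theorem~\ref{thm: lowering the known Energy threshold}. There is therefore nothing in the paper to compare your proposal against.

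That said, your outline is the standard one and matches in spirit what is done in the cited reference: Borel--Cantelli and the natural cover for the convergence half and the dimension upper bound; a Chung--Erd\H{o}s/quasi-independence argument localised to arbitrary subintervals (exploiting the uniform constants $c_1,c_2,c_3$ of the optimal regular system) for the full-measure conclusion in the divergence half; and a Cantor-type construction with the mass distribution principle for the dimension lower bound. As a sketch this is correct, though of course the actual work lies in the bookkeeping you flag at the end --- choosing the scales $Q_k$ so that separation, index-window, and Frostman balance simultaneously --- and your proposal does not carry that out.
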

For a rational $\alpha=\frac{p}{q}$, where $p,q\in\mathbb{Z}$,
$q\neq0$, we denote by $H\left(\alpha\right)$ its (naive) height,
i.e. $H\left(\alpha\right)\coloneqq\max\left\{ \left|p\right|,\left|q\right|\right\} $.
It is well-known that the set of rational numbers in $\left(0,1\right)$,
ordered in classes by increasing height and in each class ordered
by numerically values, gives rise to an optimal regular system in
$\left(0,1\right)$. The following lemma says, roughly speaking, that
this assertion remains true for the set of rationals in $\left(0,1\right)$
whose denominators are members of a special sequence that is not too
sparse in the natural numbers. The proof can be given by modifying
the proof of the classical case, compare \cite[Prop. 5.3]{Bugeaud: Approximation by algebraic numbers};
however, we shall give the details for making this article more self-contained. 
\begin{lem}
\label{lem: optimal regular system}Let $\vartheta:\mathbb{R}_{>0}\rightarrow\mathbb{R}_{>1}$
be monotonically increasing to infinity with $\vartheta\left(x\right)=\mathcal{O}\bigl(x^{\nicefrac{1}{4}}\bigr)$
and $\vartheta\left(2^{j+1}\right)/\vartheta\left(2^{j}\right)\rightarrow1$
as $j\rightarrow\infty$. For each $j\in\mathbb{N}$, we let 
\[
B_{j}\coloneqq\frac{2^{j}}{f\left(2^{j}\right)\sqrt{\vartheta\left(2^{j}\right)}},\qquad b_{j}\coloneqq\frac{2}{3}B_{j}.
\]
Let $S=\bigl(\alpha_{i}\bigr)_{i}$ denote a sequence running through
all rationals in $\left(0,1\right)$ whose denominators are in $M\coloneqq\bigcup_{j\geq1}\bigl\{ n\in\mathbb{N}:\,b_{j}\leq n\leq B_{j}\bigr\}$
such that $i\mapsto H\bigl(\alpha_{i}\bigr)$ is non-decreasing. Then,
$S$ is an optimal regular system in $\left(0,1\right)$.
\end{lem}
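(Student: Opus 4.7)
The plan is to modify the classical proof that the Farey sequence forms an optimal regular system in $(0,1)$, adapting it to accommodate the restriction that denominators lie in $M$. First I would reduce the ordering by height to ordering by denominator: since every $\alpha_i = p_i/q_i$ is a reduced fraction in $(0,1)$, we have $p_i < q_i$ and so $H(\alpha_i) = q_i$. Thus the hypothesis that $i \mapsto H(\alpha_i)$ is non-decreasing means we list the rationals in order of their denominators, and the index of the last rational with denominator at most $Q$ equals
$$N(Q) := \sum_{q \in M,\,q \leq Q} \phi(q).$$
Using the classical estimate $\sum_{q \leq X} \phi(q) = 3 X^{2} / \pi^{2} + O(X \log X)$ together with $b_{j} = \tfrac{2}{3} B_{j}$, I would deduce $\sum_{b_{j} \leq q \leq B_{j}} \phi(q) = \tfrac{5}{3\pi^{2}} B_{j}^{2} + O(B_{j} \log B_{j})$. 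The hypothesis $\vartheta(2^{j+1})/\vartheta(2^{j}) \to 1$, together with the analogous property $f(2^{j+1})/f(2^{j}) \to 1$ (which flows from the convergence/divergence assumptions on $\sum 1/(n f(n))$ exactly as in Lemma~\ref{lem: auxiliary lemma for calculating additive energy}), forces $B_{j+1}/B_{j} \to 2$, so the partial sums telescope to a geometric series giving $N(B_{j}) = \tfrac{20}{9\pi^{2}} B_{j}^{2}(1 + o(1))$.

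With this growth information I would verify the three optimal regular system conditions directly. Given $I \subseteq (0,1)$, I would choose $Q_{0} = Q_{0}(S,I)$ so that for every $Q \geq Q_{0}$ the unique integer $j = j(Q)$ satisfying $N(B_{j-1}) \leq Q < N(B_{j})$ also satisfies $\lambda(I) b_{j} \geq 1$; this is possible because $b_{j} \to \infty$ (the assumption $\vartheta(x) = \mathcal{O}(x^{1/4})$ prevents $b_{j}$ from being too small). I would then take the candidate set
$$T := \bigl\{ \alpha_{i} = p/q \in I : \gcd(p,q)=1,\ b_{j} \leq q \leq B_{j} \bigr\}.$$
For (\ref{eq: property of c3 in optimal regular system definition}), the elementary count $\#\{p/q \in I : \gcd(p,q)=1\} = \phi(q) \lambda(I) + O\bigl(2^{\omega(q)}\bigr)$ summed over $q \in [b_{j},B_{j}]$, combined with $Q \asymp B_{j}^{2}$, yields $c_{3} \lambda(I) Q \leq \#T \leq \lambda(I) Q$. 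For (\ref{eq: property of c2 in optimal regular system definition}), any two distinct reduced rationals with denominators $\leq B_{j}$ are separated by at least $1/(q_{1} q_{2}) \geq 1/B_{j}^{2} \gg 1/Q$. For (\ref{eq: property of c1 in optimal regular system definition}), every $\alpha_{i} \in T$ has $i \geq N(b_{j} - 1) \geq N(B_{j-1}) \geq \tfrac{1}{4} N(B_{j})(1+o(1)) \geq c_{1} Q$ for a suitable $c_{1} \in (0, 1/4)$.

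The main obstacle will be controlling the uniformity of the asymptotics in the second step: the $o(1)$ terms in $N(B_{j}) = \Theta(B_{j}^{2})$ depend on $j$, while $I$ is fixed, so the choice of $Q_{0}$ must make these errors negligible against the main term of the Farey count inside $I$. This is where the assumption $\vartheta(x) = \mathcal{O}(x^{1/4})$ and the regularity of $\vartheta, f$ at dyadic scales enter decisively: together they force $b_{j}$ to tend to infinity at a polynomial rate in $2^{j}$, so that for all sufficiently large $Q$ the condition $\lambda(I) b_{j} \geq 1$ holds and the elementary Farey counting lemma can be applied with error $O\bigl(2^{\omega(q)}\bigr) = q^{o(1)}$ that is absorbed into the $o(1)$ correction of the main term $\phi(q) \lambda(I)$.
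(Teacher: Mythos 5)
Your proposal follows the same basic strategy as the paper's proof: adapt the classical Farey argument by replacing the full set of reduced rationals with those whose denominators lie in $M$, use $\sum_{q}\varphi(q)$ asymptotics to relate index and height, and use the Farey separation $\geq 1/(q_1q_2)$ for the spacing condition. In that sense the approaches agree. You also observe, correctly, that ordering by height is the same as ordering by denominator for reduced fractions in $(0,1)$, which is tacit in the paper. Omitting the paper's auxiliary set $F$ (the $\xi$'s that are well approximable by small-denominator fractions) and instead counting the fractions in $I$ directly via the elementary estimate $\#\{p/q\in I,\gcd(p,q)=1\}=\varphi(q)\lambda(I)+\mathcal{O}(2^{\omega(q)})$ is a mild simplification and works: the accumulated error is $\mathcal{O}(B_j\log B_j)$, which is absorbed for $Q$ large depending on $\lambda(I)$, exactly as you note at the end.

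However, there are a few concrete slips to fix.

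First, the choice of $j$ is off by one. You set $j$ by $N(B_{j-1})\leq Q< N(B_j)$ and then take $T$ to consist of the $\alpha_i=p/q$ with $b_j\leq q\leq B_j$. But the largest index among these is $N(B_j)$, which \emph{exceeds} $Q$; this violates the requirement $i_t\leq Q$ in (\ref{eq: property of c1 in optimal regular system definition}). You should instead take $j$ with $N(B_j)\leq Q<N(B_{j+1})$, so that every element of $T$ has index at most $N(B_j)\leq Q$; the rest of the estimates then still go through, since $Q<N(B_{j+1})=\Theta(B_{j+1}^2)=\Theta(B_j^2)$ (using $B_{j+1}\leq 2B_j$).

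Second, the claim that $f(2^{j+1})/f(2^j)\to 1$ ``flows exactly as in Lemma~\ref{lem: auxiliary lemma for calculating additive energy}'' is only justified in the divergence case. In the convergence case the hypothesis is merely $\sup\{f(2x)/f(x):x\geq x_0\}<2$, which permits $f(2^{j+1})/f(2^j)$ to stay bounded away from $1$ (e.g.\ $f(x)=x^{1/4}$). Consequently $B_{j+1}/B_j\to 2$ is unjustified there, and the explicit constant $\tfrac{20}{9\pi^2}$ is not available. This is harmless only if you replace the precise asymptotic for $N(B_j)$ by the two-sided estimate $N(B_j)=\Theta(B_j^2)$, which follows from the trivial bounds $\sum_{b_j\leq q\leq B_j}\varphi(q)\leq N(B_j)\leq\sum_{q\leq B_j}\varphi(q)$ and holds in both cases.

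Third, the chain $i\geq N(b_j-1)\geq N(B_{j-1})$ silently assumes $b_j>B_{j-1}$, i.e.\ that the blocks $[b_{j-1},B_{j-1}]$ and $[b_j,B_j]$ are disjoint. Since $b_j=\tfrac23 B_j$, disjointness requires $B_j/B_{j-1}>\tfrac32$, which again may fail in the convergence case when $\sup f(2x)/f(x)$ is close to $2$. The clean fix is what the paper does: deduce a two-sided relation between the index and the height directly (the paper's $\tfrac{\sqrt{i}}{2}\leq H(\alpha_i)\leq\sqrt{25\pi^2(i+1)}+1$), and then lower-bound $i$ for $\alpha_i=p/q\in T$ via $q\geq b_j=\tfrac23 B_j$, obtaining $i\gg b_j^2\gg Q$.

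With these repairs your argument is sound and is, in substance, the same Farey-counting argument the paper uses, shorn of the auxiliary set $F$.
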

\begin{proof}
Let $X\geq2$. There are strictly less than $2X^{2}$ rational numbers
in $\left(0,1\right)$ with height bounded by $X$. We take $J=J\left(X\right)$
to be the largest integer $j\geq1$ such that $B_{j}\leq X$. Then,
for $X$ large enough, there are at least
\begin{align*}
\sum_{j\leq J}\sum_{b_{j}\leq q\leq B_{j}}\varphi\left(q\right) & \geq\sum_{j\leq J}\left(\frac{1}{3\pi^{2}}B_{j}^{2}+\mathcal{O}\left(B_{j}\log B_{j}\right)\right)\\
 & \geq\frac{1}{6\pi^{2}}\frac{2^{2J}}{f^{2}\left(2^{J}\right)\vartheta\left(2^{J}\right)}+\mathcal{O}\left(J2^{J}\right)\\
 & >\left(\frac{X}{5\pi}\right)^{2}
\end{align*}
distinct such rationals in $\left(0,1\right)$ with height not exceeding
$X$. Hence, we obtain $\frac{\sqrt{i}}{2}\leq H\left(\alpha_{i}\right)\leq\sqrt{25\pi^{2}\left(i+1\right)}+1$
for $i$ sufficiently large. Let $Q\in\mathbb{N}$, $I\subseteq\left[0,1\right]$
be a non-empty interval, and let $F$ denote the set of $\xi\in I$
satisfying the inequality $\left\Vert q\xi\right\Vert <Q^{-1}$ with
some $1\leq q\leq\frac{1}{1000}Q$. Note that $F$ has measure at
most 
\[
\sum_{q\leq\frac{1}{1000}Q}\left(\frac{2}{qQ}q\lambda\left(I\right)+\frac{2}{qQ}\right)=\frac{1}{500}\lambda\left(I\right)+\mathcal{O}\left(\frac{\log Q}{Q}\right)<\frac{1}{400}\lambda\left(I\right)
\]
for $Q\geq Q_{0}$ where $Q_{0}=Q_{0}\left(S,I\right)$ is sufficiently
large. Let $\bigl\{\nicefrac{p_{j}}{q_{j}}\bigr\}_{1\leq j\leq t}$
be the set of all rationals $\nicefrac{p_{j}}{q_{j}}\in\left(0,1\right)$
with $q_{j}\in M$, $\frac{1}{1000}Q<q_{j}<Q$ that satisfy
\[
\left|\frac{p_{j}}{q_{j}}-\frac{p_{j'}}{q_{j'}}\right|>\frac{2000}{Q^{2}}
\]
whenever $1\leq j\neq j'\leq t$. Observe that for $J$ as above with
$X=Q$ sufficiently large, it follows that
\[
\left\{ q\in M:\,b_{J}\leq q\leq B_{J}\right\} \subseteq\left\{ \frac{Q}{1000},\frac{Q}{1000}+1,\ldots,Q\right\} 
\]
holds and there are hence at least $\frac{1}{3\pi^{2}}B_{J}^{2}+\mathcal{O}\left(B_{J}\log B_{J}\right)>\frac{1}{400}Q^{2}$
choices of $\nicefrac{p_{j}}{q_{j}}\in\left(0,1\right)$ with $q_{j}\in M$
and $\frac{1}{1000}Q<q_{j}<Q$. Due to $\lambda\left(I\setminus F\right)>\frac{399}{400}\lambda\left(I\right)$,
we conclude $t\geq400\frac{Q^{2}}{4000}\frac{399}{400}\lambda\left(I\right)$.
Thus, taking $c_{1}\coloneqq\nicefrac{1}{1000}$, $c_{2}\coloneqq2000$,
and $c_{3}\coloneqq\frac{399}{4000}$ in (\ref{eq: property of c1 in optimal regular system definition}),
(\ref{eq: property of c2 in optimal regular system definition}) and
(\ref{eq: property of c3 in optimal regular system definition}),
respectively, $S$ is shown to be an optimal regular system.
\end{proof}
Now we can proceed to the proof of Theorem \ref{thm: lowering the known Energy threshold}.

\subsection{Proof of Theorem \ref{thm: lowering the known Energy threshold}}

We argue in two steps depending on whether or not the series (\ref{eq: divergence of the reciprocal of (f(n) times n)})
converges. Proposition (\ref{prop: additive energy of good-guy-bad-guy sequence})
implies the announced $\Theta$-bounds on the additive energy of $A_{N}$,
in both cases.\\
\\
\noindent (i) Suppose (\ref{eq: divergence of the reciprocal of (f(n) times n)})
diverges, and fix $s>0$. Let $\vartheta:\mathbb{R}_{>0}\rightarrow\mathbb{R}_{>1}$
be monotonically increasing to infinity with $\vartheta\left(x\right)=\mathcal{O}\left(x^{\nicefrac{1}{4}}\right)$
such that
\begin{equation}
\psi\left(n\right)\coloneqq\frac{1}{nf\left(n\right)\vartheta\left(n\right)}\label{eq: specification of the appxomation function-1}
\end{equation}
satisfies the divergence condition (\ref{eq: sum over psi values}).
Thus, $\vartheta\left(2^{j}\right)/\vartheta\left(2^{j-1}\right)\rightarrow1$
as $j\rightarrow\infty$. Hence, $S=\left(\alpha_{i}\right)_{i}$
from Lemma \ref{lem: optimal regular system} is an optimal regular
system. Furthermore, if $\alpha_{i}=\frac{m}{n}$, then $i\geq cn^{2}$
holds true with a constant $c=c\left(f,\vartheta\right)>0$ due to
$b_{J}\leq n\leq B_{J}$, for some integer $J$, and 
\[
\sum_{j\leq J-1}\sum_{b_{j}\leq m\leq B_{j}}\varphi\left(m\right)=\Theta\bigl(B_{J}^{2}\bigr).
\]
Therefore, $\psi\left(i\right)\leq c^{-1}n^{-2}\bigl(f\bigl(cn^{2}\bigr)\vartheta\bigl(cn^{2}\bigr)\bigr)^{-1}$.
The growth assumption on $f$ and the growth bound $\vartheta\left(x\right)=\mathcal{O}\bigl(x^{\nicefrac{1}{4}}\bigr)$
yields that if $j$ is large enough, then $b_{j}\leq n\leq B_{j}$
implies $cn^{2}>2^{j}$ and hence we obtain $\psi\left(i\right)\leq c^{-1}n^{-2}\bigl(f\bigl(2^{j}\bigr)\vartheta\bigl(2^{j}\bigr)\bigr)^{-1}$.
Combining these considerations, we have established that 
\[
n\psi\left(i\right)=\mathcal{O}\left(2^{-j}\left(\vartheta\bigl(2^{j}\bigr)\right)^{-\nicefrac{1}{2}}\right).
\]
Moreover, for a function $g:\mathbb{N}\rightarrow\mathbb{R}_{>0}$,
we let $E_{g}$ denote the set of $\alpha\in\left(0,1\right)$ such
that for infinitely many $j$ there is some $n$ with $b_{j}\leq n\leq B_{j}$
satisfying $\left\Vert n\alpha\right\Vert =\mathcal{O}\left(2^{-j}g\left(j\right)\right)$.
Set $h\left(j\right)\coloneqq\bigl(\vartheta\bigl(2^{j}\bigr)\bigr)^{-\nicefrac{1}{2}}$.
Applying Theorem \ref{thm: Khintchine a la Victor} with $\psi$
as in (\ref{eq: specification of the appxomation function-1}), implies
that $E_{h}$ has full measure. Therefore, for any $\alpha\in E_{h}$
we get
\begin{equation}
\left\Vert n\alpha\right\Vert \leq n\left|\alpha-\alpha_{i}\right|=\mathcal{O}\Bigl(2^{-j}\left(\vartheta\bigl(2^{j}\bigr)\right)^{-\nicefrac{1}{2}}\Bigr)\label{eq: good approximation to alpha in terms of psi-1}
\end{equation}
for infinitely many $j$. Now if $b_{j}\leq n\leq B_{j}$ for $j$
sufficiently large and $n,\alpha$ as in (\ref{eq: good approximation to alpha in terms of psi-1}),
then it follows that by taking any integer $m\leq\left(f\left(2^{j}\right)\right)^{\gamma}\bigl(\vartheta\bigl(2^{j}\bigr)\bigr)^{\frac{1}{3}}$
also the multiples 
\[
nm\leq2^{j}\left(f\left(2^{j}\right)\right)^{\gamma-1}\left(\vartheta\left(2^{j}\right)\right)^{-\nicefrac{1}{6}}
\]
satisfy that $\mathbf{1}_{\left[0,sT_{j}\right]}\left(\left\Vert \alpha(mn)\right\Vert \right)=1$
where $T_{j}=\mathcal{O}\bigl(2^{j}\left(f\left(2^{j}\right)\right)^{-\gamma}\bigr)$
is as in the Proof of Proposition \ref{prop: additive energy of good-guy-bad-guy sequence}.
If additionally $\gamma-1\geq-\beta$ holds, then we obtain that $\text{rep}_{A_{T_{j}},A_{T_{j}}}(mn)\geq\nicefrac{1}{2}2^{j}\left(f\left(2^{j}\right)\right)^{-\beta}$
holds for $j$ sufficiently large. By (\ref{eq: lower bound for counting function fo the pair correlations}),
we obtain 
\[
R\bigl(\left[-s,s\right],\alpha,T_{j}\bigr)\geq C\left(f\left(2^{j}\right)\right)^{2\gamma-\beta}\bigl(\vartheta\bigl(2^{j}\bigr)\bigr)^{\nicefrac{1}{3}}
\]
for infinitely many $j$ where $C>0$ is some constant. For the optimal
choice of the parameters $\beta,\gamma>0$, we are therefore led to
find the maximal $\beta$ such that $2\gamma-\beta\geq0$ and $\gamma-1\geq-\beta$
is satisfied. The (unique) solution is $\beta=\nicefrac{2}{3}$ and
$\gamma=\nicefrac{1}{3}$. Hence, (\ref{eq: divergence of the Pair Correlation Function})
follows for $\alpha\in E_{h}$.\\
\\
\noindent (ii) Suppose the series (\ref{eq: divergence of the reciprocal of (f(n) times n)})
converges. We keep the same sequence as in step (i) while taking $\vartheta\left(x\right)=1+\log\left(x\right)$,
as we may. The arguments of step (i) show that any $\alpha\in E_{h}$,
where $h\left(j\right)=j^{-\nicefrac{1}{2}}$, satisfies (\ref{eq: divergence of the Pair Correlation Function});
now the conclusion is that $E_{h}$ has Hausdorff dimension equal
to the reciprocal of
\[
\liminf_{x\rightarrow\infty}\frac{-\log\left(\psi\left(x\right)\right)}{\log x}=1+\liminf_{x\rightarrow\infty}\frac{\log f\left(x\right)}{\log x}.
\]
Thus, the proof is complete.

\paragraph*{Concluding remarks}

We would like to mention two open problems related to this article.
The first problem concerns extensions of Theorem \ref{thm: full measure of set of counterexamples}.
\begin{problem}
Let $\left(a_{n}\right)_{n}$ be an increasing sequence of positive
integers with $E\left(A_{N}\right)=\Omega\left(N^{3}\right)$. Has
the complement of $\NPPC\left(\left(a_{n}\right)_{n}\right)$ Hausdorff
dimension zero; or is it, in fact, empty?
\end{problem}
The second problem is related to Corollary \ref{cor: order of magnitude for the additive energy of the sequence of counter examples}.
\begin{problem}
How large has $E\left(A_{N}\right)$ to be for ensuring that $\NPPC\left(\left(a_{n}\right)_{n}\right)$
has full Lebesgue measure?
\end{problem}

\paragraph*{Acknowledgements}

Both authors would like to express their gratitude towards C. Aistleitner
for introducing us to the topic of this article, and valuable discussions.

\paragraph*{Addresses\protect \\
}

Thomas Lachmann, 

5010 Institut für Analysis und Zahlentheorie

8010 Graz,

Steyrergasse 30/II

email: lachmann@math.tugraz.at\\
\\
Niclas Technau,

5010 Institut für Analysis und Zahlentheorie

8010 Graz,

Steyrergasse 30/II,

email: technau@math.tugraz.at

\end{document}